\documentclass[11pt]{article}

\usepackage{amsmath,amssymb,amsthm,mathtools}
\usepackage{enumitem}
\usepackage{geometry}
\usepackage{hyperref}
\newtheorem{theorem}{Theorem}[section]
\newtheorem{lemma}[theorem]{Lemma}
\newtheorem{proposition}[theorem]{Proposition}
\newtheorem{corollary}[theorem]{Corollary}
\newtheorem{remark}[theorem]{Remark}
\newtheorem{example}[theorem]{Example}
\newtheorem{definition}[theorem]{Definition}

\newcommand{\F}{\mathbb F_2}
\newcommand{\wt}{\operatorname{wt}}
\newcommand{\bs}{\operatorname{bs}}
\newcommand{\as}{\overline{s}}
\newcommand{\preceqtwo}{\preceq_2}
\newcommand{\npreceqtwo}{\npreceq_2}

\title{Sensitivity and Block Sensitivity of Elementary Symmetric Boolean Functions of Arbitrary Degree}
\author{Yuan Li \and Jing Zhang}
\date{}

\begin{document}

\maketitle

\begin{abstract}
Let $\sigma_{n,d}$ denote the elementary symmetric Boolean function of $n$
variables and degree $d$. We completely determine the sensitivity, average
sensitivity, and block sensitivity of $\sigma_{n,d}$ for every $1\le d\le n$.
Using Lucas' theorem, we obtain a uniform binary description of the
Hamming-weight value sequence, from which the sensitivity and
average-sensitivity formulas follow and the computation of block sensitivity
reduces to at most four explicit candidates. Combining these results with the
arbitrary-degree formula for certificate complexity, we determine the exact
relations among sensitivity, block sensitivity, and certificate complexity.

We also prove a general result for symmetric Boolean functions: every
nonconstant symmetric Boolean function $f$ satisfies
\[
 \bs(f)\le \max\{s(f),C(f)-1\}.
\]
Consequently, only the three patterns
\[
 s=\bs=C,\qquad s=\bs<C,\qquad s<\bs<C
\]
can occur for nonconstant symmetric Boolean functions. For elementary
symmetric Boolean functions, we give necessary and sufficient conditions for
each of these three patterns, thereby completely classifying the relations
among $s(\sigma_{n,d})$, $\bs(\sigma_{n,d})$, and $C(\sigma_{n,d})$. In
particular, we obtain a necessary and sufficient characterization of the full
strict hierarchy
\[
 s(\sigma_{n,d})<\bs(\sigma_{n,d})<C(\sigma_{n,d}),
\]
and exhibit infinite families for which it holds.
\end{abstract}

\noindent\textbf{Keywords:}
Boolean function; elementary symmetric Boolean function; symmetric Boolean
function; sensitivity; average sensitivity; block sensitivity; certificate
complexity; Lucas theorem; binary containment.

\section{Introduction}

Sensitivity and block sensitivity are basic local complexity measures of
Boolean functions and are closely connected with decision-tree and query
complexity; they have played a central role in the study of relations among
Boolean-function complexity measures
\cite{CookDworkReischuk1986,Nisan1991,KenyonKutin2004}.
Average sensitivity records the corresponding average local behavior over
the Boolean cube.  It is therefore natural to ask for exact values of these
measures on important structured classes of Boolean functions.

Symmetric Boolean functions are among the most basic such classes: their
values depend only on Hamming weight.  Related recent work has also studied
query complexity for Boolean functions restricted to fixed Hamming-weight
slices \cite{Byramji2024}, further illustrating the role of Hamming-weight
structure in Boolean query complexity.  Elementary symmetric Boolean functions
provide a particularly natural algebraic family within this class.  For the
elementary symmetric Boolean function $\sigma_{n,d}$ of degree $d$,
\[
 \sigma_{n,d}(x)=\binom{\wt(x)}{d}\pmod 2.
\]
Consequently, Lucas' theorem converts its Hamming-weight value sequence into
a binary digit-containment problem.  This arithmetic structure makes it
possible to seek exact complexity formulas that hold uniformly in both the
number of variables $n$ and the degree $d$.

The present authors, together with Adeyeye \cite{ZhangLiAdeyeye2021}, studied
sensitivity and block sensitivity for elementary symmetric Boolean functions.
In that work we proved, among other results, that the sensitivity is $n$ for odd $d$, obtained an explicit
formula for $d=2^k$, derived formulas for several additional even degrees such
as $d=6,10,12$, and showed how further fixed degrees can be handled from their
periodic value sequences.  In the same work, we also established a general formula for the
block sensitivity of an arbitrary symmetric Boolean function in terms of the
constant runs in its value vector, and exhibited cases in which block
sensitivity is strictly greater than sensitivity.  Thus the general
symmetric-function run formula is an earlier result and is not claimed as new
here.  More recently, Mittal, Nair, and Patro \cite{MittalNairPatro2024}
studied relations among query-complexity measures for symmetric Boolean
functions.  In particular, they proved the tight constant-factor bounds
\[
 C(f)\le 2s(f),\qquad \frac{\bs(f)}{s(f)}\le\frac32,
\]
and gave constructions showing that these separations are essentially tight.
They also gave the local run formula for block sensitivity and showed that an
optimal family of sensitive blocks may be chosen so that each block consists
entirely of $1$-coordinates or entirely of $0$-coordinates.  Their results
address the magnitude of possible separations.  In contrast, one of our
general results below concerns the exact equality structure at the upper end
of the standard chain $s(f)\le\bs(f)\le C(f)$: we prove
\[
 \bs(f)\le\max\{s(f),C(f)-1\},
\]
which implies that $\bs(f)=C(f)$ forces $s(f)=\bs(f)=C(f)$.

The purpose of the present paper is to give a uniform combinatorial treatment
of sensitivity, average sensitivity, and block sensitivity for the entire
family $\{\sigma_{n,d}:1\le d\le n\}$.  Rather than treating individual
degrees through separate periodic strings or residue classes, we organize the
whole family by a binary containment structure.  The unifying observation is
obtained by writing
\[
 d=2^cD,\qquad D\ {\rm odd},\qquad h=2^c.
\]
The Hamming-weight value sequence of $\sigma_{n,d}$ is constant on blocks of
$h$ consecutive weights, while the block indexed by $t$ has value $1$ exactly
when the binary expansion of $t$ contains every $1$-bit of $D$.  Thus the
power-of-two part of $d$ determines the block length, whereas the odd part
determines the locations of the $1$-blocks.  The same binary description
drives all three complexity calculations.

This viewpoint also connects naturally with our earlier work on certificate
complexity.  The two present authors first determined certificate complexity
for odd degrees and powers of two \cite{ZhangLi2022}, and more recently
completed the arbitrary-degree case \cite{ZhangLi2026} using the same
underlying binary decomposition.  Thus \cite{ZhangLiAdeyeye2021},
\cite{ZhangLi2022}, and \cite{ZhangLi2026} are part of the same research
program by the present authors.  The objective here is to complete the
corresponding determination of sensitivity, average sensitivity, and block
sensitivity for arbitrary degree and to classify the resulting relations among
these measures and certificate complexity.

Our main contributions are as follows.
\begin{enumerate}[label=(\roman*)]
\item We obtain an exact sensitivity formula for every even degree $d$.
Together with the known odd-degree result, this determines the sensitivity of
$\sigma_{n,d}$ for every $1\le d\le n$.
\item We identify every transition of the Hamming-weight value sequence by a
binary containment condition.  The standard sensitive-edge counting identity
then gives an exact formula for the average sensitivity for every degree.
\item For block sensitivity, we start from the known run-based formula for
symmetric Boolean functions and exploit the special binary structure of
$\sigma_{n,d}$.  Internal zero-runs can be discarded, and among complete
$1$-runs only the first and last can be extremal.  Consequently the global
block sensitivity is reduced to at most four explicit candidates.
\item Comparing the exact formulas for sensitivity and block sensitivity gives
a necessary and sufficient condition for
$\bs(\sigma_{n,d})>s(\sigma_{n,d})$.  Hence the equality question is
decided for every $n$ and $d$.
\item For every nonconstant symmetric Boolean function $f$, we prove the
structural inequality
\[
 \bs(f)\le \max\{s(f),C(f)-1\}.
\]
Consequently, equality at the upper end forces equality throughout:
$\bs(f)=C(f)$ implies $s(f)=\bs(f)=C(f)$.  In particular, the pattern
$s(f)<\bs(f)=C(f)$ cannot occur for a nonconstant symmetric Boolean function.
\item Using this general theorem together with the arbitrary-degree
certificate-complexity formula from \cite{ZhangLi2026}, we completely classify
the relations among the three measures for elementary symmetric Boolean
functions.  Exactly three patterns occur,
\[
 s=\bs=C,\qquad s=\bs<C,\qquad s<\bs<C,
\]
and we give necessary and sufficient conditions for each.  For even
$d=hD$ and $n=hq+u$, the first pattern occurs exactly when
$D\preceqtwo q$ or $D\preceqtwo(q-1)$; outside this case the explicit
block-sensitivity candidates decide between the remaining two patterns.
We also exhibit explicit infinite families realizing the full strict hierarchy.
\item The odd-degree, power-of-two, and previously treated small even-degree
formulas arise naturally as special cases of the same framework.
\end{enumerate}

Taken together, these results give a uniform determination of three standard
complexity measures for elementary symmetric Boolean functions of arbitrary
degree, replacing separate degree-by-degree analyses by a single binary
structure.

\section{Preliminaries}

For integers $a\le b$, write $[a,b]=\{a,a+1,\ldots,b\}$; in particular, $[n]=[1,n]=\{1,2,\ldots,n\}$.

Let $\F=\{0,1\}$.  For $1\le d\le n$, the elementary symmetric Boolean
function of degree $d$ is
\[
 \sigma_{n,d}(x_1,\ldots,x_n)
 =
 \bigoplus_{1\le i_1<\cdots<i_d\le n}
 x_{i_1}\cdots x_{i_d}.
\]
If $\wt(x)=x_1+\cdots+x_n$, then
\begin{equation}\label{eq:value}
 \sigma_{n,d}(x)=\binom{\wt(x)}{d}\pmod 2.
\end{equation}

For a symmetric Boolean function $f$, let $v_j$ denote its value on all
vectors of Hamming weight $j$.  The sequence
\[
 v_0,v_1,\ldots,v_n
\]
will be called the Hamming-weight value sequence of $f$.

For nonnegative integers $a$ and $b$, write
\[
 a\preceqtwo b
\]
if every binary digit equal to $1$ in $a$ is also equal to $1$ in $b$.
Lucas' theorem modulo $2$ gives
\begin{equation}\label{eq:lucas}
 \binom{b}{a}\equiv 1\pmod 2
 \quad\Longleftrightarrow\quad
 a\preceqtwo b.
\end{equation}

For $x\in\F^n$, the sensitivity of $f$ at $x$, denoted $s(f;x)$, is the
number of coordinates whose individual flip changes the value of $f$.
The sensitivity is
\[
 s(f)=\max_{x\in\F^n}s(f;x),
\]
and the average sensitivity is
\[
 \as(f)=2^{-n}\sum_{x\in\F^n}s(f;x).
\]

A set $B\subseteq[n]$ is a \emph{sensitive block} for $f$ at $x$ if
flipping all coordinates in $B$ changes the value of $f$.  The block
sensitivity $\bs(f;x)$ is the maximum number of pairwise disjoint sensitive
blocks for $f$ at $x$, and
\[
 \bs(f)=\max_{x\in\F^n}\bs(f;x).
\]

A set $S\subseteq[n]$ is a \emph{certificate} for $f$ at $x$ if every
$y\in\F^n$ satisfying $y_i=x_i$ for all $i\in S$ also satisfies
$f(y)=f(x)$.  The certificate complexity of $f$ at $x$ is
\[
 C(f;x)=\min\{\lvert S\rvert:S\text{ is a certificate for }f\text{ at }x\},
\]
and the certificate complexity of $f$ is
\[
 C(f)=\max_{x\in\F^n}C(f;x).
\]
These measures satisfy the standard inequalities
\[
 s(f)\le \bs(f)\le C(f).
\]

We shall also use the following certificate formula for symmetric Boolean
functions, due to Dahiya and Mahajan \cite[Lemma~5.7]{DahiyaMahajan2023},
rewritten here in our maximal-constant-interval notation.  For completeness,
we include a short proof so that the later comparison of block sensitivity and
certificate complexity is self-contained.

\begin{lemma}[Dahiya--Mahajan: certificate complexity from constant intervals]
\label{lem:symmetric-certificate-runs}
Let $f:\{0,1\}^n\to\{0,1\}$ be symmetric, and let $[a,b]$ be the
maximal constant interval of its value vector containing the Hamming weight
$w$.  If $x$ has weight $w$, then
\[
 C(f;x)=a+(n-b)=n-(b-a+1)+1.
\]
Consequently, if $L_{\min}$ is the minimum length of a maximal constant
interval of the value vector, then
\[
 C(f)=n-L_{\min}+1.
\]
\end{lemma}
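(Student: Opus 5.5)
The plan is to prove $C(f;x)=a+(n-b)$ by matching an upper bound (an explicit certificate of that size) with a lower bound (every smaller set can be fooled). Write $w=\wt(x)$, so $a\le w\le b$. Let $O=\{i:x_i=1\}$ and $Z=\{i:x_i=0\}$, so $|O|=w$ and $|Z|=n-w$.

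For the upper bound, I choose $S$ to consist of any $a$ coordinates from $O$ together with any $n-b$ coordinates from $Z$. These sizes are legal because $a\le w$ and $n-b\le n-w$. Suppose $y$ agrees with $x$ on $S$. Then $y$ has at least $a$ ones, since the $a$ fixed ones stay. It also has at least $n-b$ zeros, so $\wt(y)\le b$. Hence $\wt(y)\in[a,b]$. The value vector is constant on $[a,b]$, so $f(y)=f(x)$. This gives $C(f;x)\le a+(n-b)$.

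For the lower bound, let $S$ have $|S|<a+(n-b)$. Then either $|S\cap O|<a$ or $|S\cap Z|<n-b$.

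In the first case, $a\ge1$, and by maximality of the interval we have $v_{a-1}\ne v_a$. There are at least $w-a+1$ one-coordinates of $x$ outside $S$. Flipping exactly $w-a+1$ of them to $0$ produces $y$ of weight $a-1$ that agrees with $x$ on $S$. Then $f(y)=v_{a-1}\ne f(x)$.

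In the second case, $b\le n-1$, so $v_{b+1}\ne v_b$. At least $b-w+1$ zero-coordinates of $x$ lie outside $S$. Flipping $b-w+1$ of them gives weight $b+1$ and again changes the value.

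So no such $S$ is a certificate, and $C(f;x)=a+(n-b)$. The identity $a+(n-b)=n-(b-a+1)+1$ is just arithmetic.

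For the consequence, note that $C(f;x)$ depends only on the length $L=b-a+1$ of the interval containing $\wt(x)$. Every maximal constant interval contains some weight. Hence maximizing $C(f;x)$ over $x$ amounts to minimizing $L$, which gives $C(f)=n-L_{\min}+1$.

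The only delicate point is the lower-bound case split. The pigeonhole step ("one of the two intersections is too small") must be paired with the observation that the corresponding endpoint is not $0$, respectively not $n$. This holds automatically: if $a=0$ then $|S\cap O|<a$ is impossible, and similarly if $b=n$ then $|S\cap Z|<n-b$ is impossible. Maximality of the interval then supplies the differing neighbouring value.
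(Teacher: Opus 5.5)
Your proof is correct and follows essentially the same route as the paper: the same certificate of $a$ one-coordinates plus $n-b$ zero-coordinates for the upper bound, and for the lower bound the fact that a smaller set leaves enough free coordinates to reach weight $a-1$ or $b+1$. The paper phrases this as the reachable weight range $[r,n-z]$ having to stay inside $[a,b]$; your explicit fooling input, together with your check that $a\ge1$ (resp.\ $b\le n-1$) holds automatically in each case, just makes the paper's ``crossing an adjacent layer'' step concrete.
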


\begin{proof}
Fix an input $x$ of weight $w\in[a,b]$.  If a certificate fixes $r$ of the
$1$-coordinates of $x$ and $z$ of its $0$-coordinates, then the Hamming
weights of inputs consistent with the certificate range from $r$ to $n-z$.
For the certificate to force the value $f(x)$, this entire interval must lie
inside the maximal constant interval $[a,b]$.  Indeed, the interval $[r,n-z]$
contains $w$; if it extended beyond $[a,b]$, it would necessarily cross an
adjacent Hamming layer on which the value of $f$ differs from $f(x)$,
contradicting the certificate property.  Hence necessarily
$r\ge a$ and $n-z\le b$, so every certificate has size at least
\[
 r+z\ge a+n-b.
\]
Conversely, since $a\le w\le b$, we may fix any $a$ of the
$1$-coordinates of $x$ and any $n-b$ of its $0$-coordinates.  Every input
consistent with these fixed coordinates then has Hamming weight in $[a,b]$,
so these $a+n-b$ coordinates form a certificate.  Thus
$C(f;x)=a+n-b$.  Maximizing over $x$ is equivalent to minimizing the length
$b-a+1$ of its maximal constant interval, which gives the stated formula for
$C(f)$.
\end{proof}

We will use the following standard form of the sensitivity calculation for a
symmetric Boolean function.

Here $\mathbf 1_E$ denotes the indicator of a statement $E$: it equals
$1$ if $E$ is true and $0$ otherwise.

\begin{lemma}\label{lem:local-s}
Let $f$ be symmetric with Hamming-weight value sequence
$v_0,\ldots,v_n$.  At any vector of weight $j$,
\[
 s(f;x)
 =
 j\,\mathbf 1_{\{v_j\ne v_{j-1}\}}
 +(n-j)\,\mathbf 1_{\{v_j\ne v_{j+1}\}},
\]
where a term involving $v_{-1}$ or $v_{n+1}$ is omitted.
\end{lemma}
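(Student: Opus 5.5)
The plan is to compute $s(f;x)$ directly from the definition by sorting the $n$ coordinates of $x$ into its $1$-coordinates and its $0$-coordinates. Fix $x$ of weight $j$.

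For each $1$-coordinate $i$ (there are exactly $j$ of them), flipping $x_i$ produces a vector of weight $j-1$. By symmetry of $f$, its value is $v_{j-1}$. So coordinate $i$ is sensitive exactly when $v_j\ne v_{j-1}$. Since this condition does not depend on which $1$-coordinate we chose, the $1$-coordinates contribute $j\,\mathbf 1_{\{v_j\ne v_{j-1}\}}$ to $s(f;x)$.

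Symmetrically, each of the $n-j$ zero-coordinates flips to a vector of weight $j+1$, whose value is $v_{j+1}$. These coordinates therefore contribute $(n-j)\,\mathbf 1_{\{v_j\ne v_{j+1}\}}$.

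Every coordinate is either a $1$-coordinate or a $0$-coordinate, and not both. Hence $s(f;x)$ is exactly the sum of the two contributions.

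The boundary cases are where the convention "omit terms involving $v_{-1}$ or $v_{n+1}$" matters:
\begin{itemize}
\item If $j=0$, there are no $1$-coordinates, so the first contribution is empty. This matches the coefficient $j=0$, and it also explains why the undefined $v_{-1}$ never has to be evaluated.
\item If $j=n$, there are no $0$-coordinates, so the second contribution is empty. Likewise, the undefined $v_{n+1}$ never enters.
\end{itemize}

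The only step needing any care is this bookkeeping at the endpoints; the rest is immediate from symmetry.
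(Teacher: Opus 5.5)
Your proposal is correct and follows essentially the same argument as the paper: split the coordinates of $x$ into its $j$ ones and $n-j$ zeros, note that a flip moves the weight to $j-1$ or $j+1$, and use symmetry. Your explicit treatment of the endpoint cases $j=0$ and $j=n$ adds care that the paper leaves implicit.
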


\begin{proof}
There are $j$ coordinates equal to $1$ and $n-j$ coordinates equal to $0$.
Flipping a $1$ decreases the Hamming weight from $j$ to $j-1$, while
flipping a $0$ increases it from $j$ to $j+1$.  Since $f$ is symmetric, the
claim follows.
\end{proof}

\section{Binary block structure}

The following two observations were first made in \cite{ZhangLi2026}.  We include their short proofs for completeness and to keep the present treatment self-contained.

\begin{lemma}[Binary block lemma]\label{lem:block}
Let
\[
 d=2^cD,\qquad D\ {\rm odd},\qquad h=2^c.
\]
Write $j=ht+v$ with $0\le v<h$.  Then
\begin{equation}\label{eq:blockcriterion}
 \binom{ht+v}{d}\equiv 1\pmod 2
 \quad\Longleftrightarrow\quad
 D\preceqtwo t.
\end{equation}
Consequently, the infinite Hamming-weight value sequence
\[
 \binom{0}{d},\binom{1}{d},\binom{2}{d},\ldots \pmod 2
\]
is constant on each block
\[
 B_t=\{ht,ht+1,\ldots,ht+h-1\},
\]
and the block $B_t$ has value $1$ if and only if $D\preceqtwo t$.
\end{lemma}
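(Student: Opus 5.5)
The plan is to apply Lucas' theorem \eqref{eq:lucas} directly to $\binom{ht+v}{d}$, splitting the binary expansions into the low $c$ bits and the high bits. Since $h=2^c$ and $0\le v<h$, the binary expansion of $j=ht+v$ is the expansion of $t$ shifted left by $c$ places, with the low $c$ bits given by $v$. There are no carries, because $ht$ has all low $c$ bits equal to $0$ and $v<2^c$ occupies only those bits. Likewise $d=2^cD$ has binary expansion equal to that of $D$ shifted left by $c$ places, with low $c$ bits all zero.

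By \eqref{eq:lucas}, $\binom{j}{d}$ is odd exactly when $d\preceqtwo j$. The next step checks this containment bit by bit. On the low $c$ positions, $d$ has only zeros, so the containment condition there is vacuous whatever $v$ is. On position $c+i$ for $i\ge 0$, the bit of $d$ is the $i$-th bit of $D$ and the bit of $j$ is the $i$-th bit of $t$. Hence $d\preceqtwo j$ holds if and only if $D\preceqtwo t$, which gives \eqref{eq:blockcriterion}.

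The consequence follows immediately. The criterion depends only on $t=\lfloor j/h\rfloor$ and not on $v$, so $\binom{j}{d}\bmod 2$ is constant on each block $B_t$ and equals $1$ exactly when $D\preceqtwo t$. Oddness of $D$ is not needed for the equivalence itself; it only fixes the decomposition $d=2^cD$ uniquely.

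The only point requiring care is the no-carry observation, namely that the bits of $ht+v$ separate cleanly into those of $t$ and those of $v$. This is immediate from $0\le v<2^c$. I do not expect any genuine obstacle here.
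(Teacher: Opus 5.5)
Your proposal is correct and follows the same route as the paper: apply Lucas' theorem to $\binom{ht+v}{d}$, observe that the low $c$ bits of $d=2^cD$ are zero and the low $c$ bits of $ht+v$ come from $v$ alone, so that $d\preceqtwo ht+v$ reduces to $D\preceqtwo t$ independently of $v$. Your added remarks, that there are no carries and that oddness of $D$ is not needed for the equivalence itself, are accurate.
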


\begin{proof}
The last $c$ binary digits of $d=2^cD$ are $0$.  The last $c$ binary
digits of $ht+v$ are exactly the binary digits contributed by $v$, while
the remaining digits are those of $t$.  Hence Lucas' theorem \cite{Lucas1878} gives
\[
 2^cD\preceqtwo ht+v
 \quad\Longleftrightarrow\quad
 D\preceqtwo t,
\]
which is independent of $v$.  Equation \eqref{eq:lucas} now proves the
result.
\end{proof}

\begin{lemma}\label{lem:isolated}
Under the hypotheses of Lemma~\ref{lem:block}, every $1$-block is isolated:
if $D\preceqtwo t$, then
\[
 D\npreceqtwo(t-1)
 \qquad\hbox{and}\qquad
 D\npreceqtwo(t+1).
\]
\end{lemma}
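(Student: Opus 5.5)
The plan is to exploit that $D$ is odd, so its least significant binary digit is $1$. The hypothesis $D\preceqtwo t$ then forces the least significant digit of $t$ to be $1$, i.e.\ $t$ is odd. Consequently both neighbours $t-1$ and $t+1$ are even, so their least significant digit is $0$. Since the corresponding digit of $D$ is $1$, the containment $D\preceqtwo t\pm1$ fails in both cases. This gives $D\npreceqtwo(t-1)$ and $D\npreceqtwo(t+1)$ directly.

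One boundary case needs attention: if $t=0$, then $t-1$ is negative. This cannot arise, however, because $D\ge 1$ has a $1$-bit that $0$ lacks, so $D\preceqtwo t$ already forces $t\ge1$ (indeed $t$ odd). Hence $t-1\ge0$ and the binary expansion of $t-1$ makes sense.

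Finally, I would restate the conclusion in the language of Lemma~\ref{lem:block}. A $1$-block $B_t$ has both neighbouring blocks $B_{t-1}$ and $B_{t+1}$ equal to $0$-blocks. So the $1$-runs of the value sequence are exactly single blocks of length $h$, except possibly for truncation at weight $n$. I do not expect any step to be an obstacle; the only point requiring care is the parity observation together with the $t\ge1$ remark.
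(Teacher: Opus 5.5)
Your proof is correct and matches the paper's argument: oddness of $D$ forces $t$ to be odd, so $t\pm1$ are even and miss the lowest $1$-bit of $D$. Your remark that $D\preceqtwo t$ forces $t\ge1$, so that $t-1$ is a nonnegative integer, is a point the paper leaves implicit.
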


\begin{proof}
Since $D$ is odd, its least significant binary digit is $1$.
Thus $D\preceqtwo t$ implies that $t$ is odd.  Both $t-1$ and $t+1$
are even and therefore have least significant binary digit $0$, so neither
can contain all the $1$-bits of $D$.
\end{proof}

For later use define
\[
 \chi_D(t)=
 \begin{cases}
 1,&D\preceqtwo t,\\
 0,&D\npreceqtwo t,
 \end{cases}
\]
and define the transition set
\begin{equation}\label{eq:transition-set}
 \mathcal T_D(q)
 =
 \{\,t:1\le t\le q,\ \chi_D(t)\ne\chi_D(t-1)\,\}.
\end{equation}
By Lemma~\ref{lem:isolated},
\begin{equation}\label{eq:transition-equivalent}
 t\in\mathcal T_D(q)
 \quad\Longleftrightarrow\quad
 D\preceqtwo t\ \hbox{or}\ D\preceqtwo(t-1).
\end{equation}

\section{Sensitivity for arbitrary degree}

For odd degree the sensitivity is already known.  The following theorem was first proved in \cite{ZhangLiAdeyeye2021}; we recall it here because it is used repeatedly below.

\begin{theorem}[Odd degree]\label{thm:odd}
If $d$ is odd and $1\le d\le n$, then
\[
 s(\sigma_{n,d})=n.
\]
\end{theorem}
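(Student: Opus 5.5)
For odd $d$ we have $c=0$ and $h=1$, so by Lemma~\ref{lem:block} the value of $\sigma_{n,d}$ on weight $j$ is just $\chi_D(j)$ with $D=d$. By Lemma~\ref{lem:local-s}, it suffices to exhibit a weight $j\in[0,n]$ with $v_j\ne v_{j-1}$ and $v_j\ne v_{j+1}$. For such a $j$ both indicator terms equal $1$, giving $s(\sigma_{n,d};x)=j+(n-j)=n$. Since trivially $s\le n$, this proves the theorem.

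The natural choice is $j=d$ itself. Because $d\preceqtwo d$, we have $v_d=1$. Lemma~\ref{lem:isolated} (with $D=d$, $t=d$) gives $d\npreceqtwo d-1$ and $d\npreceqtwo d+1$, so $v_{d-1}=v_{d+1}=0$. A more elementary check is also available: $\binom{d-1}{d}=0$, and $\binom{d+1}{d}=d+1$ is even because $d$ is odd.

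The only care needed is at the boundary. Since $d\ge1$, the index $d-1\ge0$ always exists. If $d=n$, the term involving $v_{n+1}$ is omitted in Lemma~\ref{lem:local-s}, but then $n-j=0$ anyway, so $s(\sigma_{n,d};x)=j=n$ still holds. Thus any input $x$ of weight $d$ has sensitivity $n$.

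I expect no real obstacle; the only point to watch is handling the edge case $d=n$ correctly.
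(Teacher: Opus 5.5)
Your proof is correct and complete. Weight $d$ is an isolated $1$ in the value sequence, since $\binom{d-1}{d}=0$ and $\binom{d+1}{d}=d+1$ is even. Lemma~\ref{lem:local-s} then gives sensitivity $j+(n-j)=n$ at any input of weight $d$, and you handle the case $d=n$ properly.

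The paper does not prove this statement; it only cites \cite{ZhangLiAdeyeye2021}. Your argument is therefore a self-contained derivation from the paper's own Lemmas~\ref{lem:local-s} and~\ref{lem:isolated}. It is the same singleton-interval observation the paper uses in the $L=1$ case of Theorem~\ref{thm:symmetric-bs-c}.
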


We now treat every even degree at once.

\begin{definition}\label{def:R}
For odd $D$ and $q\ge D$, define
\[
 R_D(q)=\max\{\,r\le q:D\preceqtwo r\,\}.
\]
\end{definition}

The set in Definition~\ref{def:R} is nonempty because $D\preceqtwo D$.

\begin{theorem}[Sensitivity for arbitrary even degree]\label{thm:main-s}
Let $d$ be even, $2\le d\le n$, and write
\[
 d=hD,\qquad h=2^{\nu_2(d)},\qquad D\ {\rm odd}.
\]
Write
\[
 n=hq+u,\qquad 0\le u<h.
\]
Then
\begin{equation}\label{eq:main-s}
 \boxed{
 s(\sigma_{n,d})
 =
 \max\left\{
 n-d+1,\,
 h\min\{q,R_D(q)+1\}
 \right\}.}
\end{equation}
Equivalently, if
\[
 \tau_D(q)=\max\mathcal T_D(q),
\]
then
\begin{equation}\label{eq:main-s-transition}
 s(\sigma_{n,d})
 =
 \max\{\,n-d+1,\ h\tau_D(q)\,\}.
\end{equation}
\end{theorem}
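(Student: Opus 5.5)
By Lemma~\ref{lem:local-s}, the sensitivity is a maximum over the transitions of the value sequence $v_0,\dots,v_n$. If $v_{j-1}\ne v_j$, the weights $j-1$ and $j$ contribute $n-(j-1)$ and $j$ respectively. An input can pick up both terms only when $v_{j-1}\ne v_j\ne v_{j+1}$. For even $d$ this never happens: by Lemma~\ref{lem:block} the sequence is constant on blocks of length $h\ge2$, so two consecutive transitions are at least $h$ apart. Hence
\[
 s(\sigma_{n,d})=\max_{j\in J}\max\{j,\;n-j+1\},
\]
where $J=\{j\in[1,n]:v_j\ne v_{j-1}\}$. So I only need the smallest and the largest element of $J$.

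By Lemma~\ref{lem:block}, transitions can occur only at block boundaries $j=ht$ with $1\le t$ and $ht\le n$, that is, $1\le t\le q$. Such a boundary is a transition exactly when $t\in\mathcal T_D(q)$.
\begin{itemize}
\item \emph{Smallest transition.} Since $D\npreceqtwo t$ for $t<D$, the first transition is at $t=D$, i.e.\ $j=d$. It gives the value $n-d+1$. Every other term $n-j+1$ with $j\in J$ is smaller.
\item \emph{Largest transition.} This is at $t=\tau_D(q)$, giving the value $h\tau_D(q)$. Every other term $j$ with $j\in J$ is smaller.
\end{itemize}
Taking the larger of these two gives \eqref{eq:main-s-transition}.

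It remains to identify $\tau_D(q)=\min\{q,R_D(q)+1\}$ using \eqref{eq:transition-equivalent}. Let $r=R_D(q)$; it exists because $q\ge D$, which follows from $hq\le n$ with $hq\ge d$ (since $n\ge d$ and $h\mid d$).
\begin{itemize}
\item If $r=q$, then $q\in\mathcal T_D(q)$, because $D\preceqtwo q$. So $\tau_D(q)=q=\min\{q,r+1\}$.
\item If $r<q$, then $r+1\le q$ and $D\preceqtwo r$, so $r+1\in\mathcal T_D(q)$. For any $t$ with $r+1<t\le q$, both $t$ and $t-1$ lie in $(r,q]$. By maximality of $r$, neither contains $D$, so $t\notin\mathcal T_D(q)$. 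Hence $\tau_D(q)=r+1$.
\end{itemize}
This gives \eqref{eq:main-s}.

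The main care point is the reduction in the first step: making sure no weight collects both a downward and an upward transition, which needs $h\ge2$ (true since $d$ is even). A second point is the boundary $j=n$. A transition at $j=hq=n$ (when $u=0$) is still legitimate, since weight $n$ has $n$ one-coordinates that can flip. Transitions beyond $t=q$ are excluded because $h(q+1)>n$. Everything else is routine bookkeeping.
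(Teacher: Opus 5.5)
Your proposal is correct and follows essentially the same route as the paper. Both arguments use block boundaries $t\in\mathcal T_D(q)$ as the only possible transitions and $h\ge 2$ to ensure no weight is adjacent to two transitions. The first transition at $t=D$ and the last at $\tau_D(q)$ supply the two candidates, and the case split $R_D(q)=q$ versus $R_D(q)<q$ identifies $\tau_D(q)=\min\{q,R_D(q)+1\}$. Your reformulation $s=\max_{j\in J}\max\{j,n-j+1\}$ is just a compact form of the paper's monotonicity argument showing that interior transitions are dominated.
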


\begin{proof}
By Lemma~\ref{lem:block}, the value of $\sigma_{n,d}$ is constant on each
$h$-block, and the $t$-block has value $1$ exactly when
$D\preceqtwo t$.  Consequently a change in the Hamming-weight value
sequence can occur only at a boundary between two consecutive $h$-blocks.
The boundary between the $(t-1)$-block and the $t$-block lies between
weights $ht-1$ and $ht$, and by definition it is a transition precisely
when $t\in\mathcal T_D(q)$.  Thus every nonzero local sensitivity must
come from one of these transition boundaries.

We first locate the earliest transition.  If $t<D$, then $D\preceqtwo t$
is impossible, since binary containment implies $t\ge D$.  On the other
hand $D\preceqtwo D$.  Hence the blocks with indices below $D$ have value
$0$, while the $D$-block has value $1$, so the first transition occurs at
$t=D$.  It is therefore between weights
\[
 d-1=hD-1
 \quad\hbox{and}\quad
 d=hD.
\]
At weight $d-1$, changing any one of the $n-d+1$ zero coordinates raises
the weight to $d$ and crosses this transition.  Hence
\[
 s(\sigma_{n,d};d-1)=n-d+1.
\]

Now let $\tau=\max\mathcal T_D(q)$ be the index of the last transition.
That transition lies between weights $h\tau-1$ and $h\tau$.  At weight
$h\tau$, changing any one of the $h\tau$ coordinates equal to $1$ lowers
the weight to $h\tau-1$ and crosses the transition.  Thus
\[
 s(\sigma_{n,d};h\tau)=h\tau.
\]

It remains to see that no interior transition can do better.  Since $d$ is
even, $h\ge2$.  Therefore the two levels $ht-1$ and $ht$ adjacent to the
boundary at $t$ cannot simultaneously be adjacent to a second $h$-block
boundary.  If $t\in\mathcal T_D(q)$, Lemma~\ref{lem:local-s} therefore
gives exactly
\[
 s(\sigma_{n,d};ht-1)=n-ht+1,
 \qquad
 s(\sigma_{n,d};ht)=ht.
\]
As $t$ increases, $n-ht+1$ strictly decreases and $ht$ strictly increases.
Hence, among all transition boundaries, the largest value of the first
quantity occurs at the first transition $t=D$, while the largest value of
the second occurs at the last transition $t=\tau$.  Thus
\[
 s(\sigma_{n,d})=\max\{n-d+1,h\tau\}.
\]

Finally we express $\tau$ in terms of $R_D(q)$.  Put $r=R_D(q)$, so $r$ is
the largest block index at most $q$ whose block has value $1$.  If $r=q$,
then the $q$-block has value $1$.  Since $D$ is odd,
Lemma~\ref{lem:isolated} shows that the $(q-1)$-block has value $0$; hence
the boundary at $q$ is a transition and $\tau=q$.  If $r<q$, maximality of
$r$ implies that the $(r+1)$-block has value $0$, whereas the $r$-block has
value $1$.  Thus the boundary at $r+1$ is a transition.  Moreover there is
no later $1$-block, so no later transition can occur.  Hence
$\tau=r+1$.  Combining the two cases gives
\[
 \tau=\min\{q,R_D(q)+1\},
\]
which substituted above yields \eqref{eq:main-s}.
\end{proof}

\begin{corollary}\label{cor:end-transition}
Under the hypotheses of Theorem~\ref{thm:main-s}, if
\[
 D\preceqtwo q
 \quad\hbox{or}\quad
 D\preceqtwo(q-1),
\]
then
\[
 s(\sigma_{n,d})
 =
 \max\{n-d+1,hq\}.
\]
In particular, since $hq=n-u$,
\[
 s(\sigma_{n,d})
 =
 \max\{n-d+1,n-u\}.
\]
\end{corollary}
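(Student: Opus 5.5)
The plan is to apply Theorem~\ref{thm:main-s} in its transition form \eqref{eq:main-s-transition}. It then suffices to show that, under either hypothesis, the last transition index satisfies $\tau_D(q)=\max\mathcal T_D(q)=q$. Once this holds, substituting gives $s(\sigma_{n,d})=\max\{n-d+1,hq\}$. The second form follows immediately from $n=hq+u$, i.e.\ $hq=n-u$.

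To show $\tau_D(q)=q$, I would use the characterization \eqref{eq:transition-equivalent}: $t\in\mathcal T_D(q)$ if and only if $1\le t\le q$ and ($D\preceqtwo t$ or $D\preceqtwo(t-1)$). Taking $t=q$, the hypothesis is exactly the right-hand side of this equivalence. Two side conditions need checking.

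\begin{itemize}
\item \emph{Range condition $1\le q$.} Since $d\le n$, we have $hD\le hq+u<h(q+1)$, so $q\ge D\ge1$.
\item \emph{Well-definedness of $q-1$ in the second case.} The condition $D\preceqtwo(q-1)$ requires $q-1\ge0$, which holds because $q\ge 1$.
\end{itemize}

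Thus $q\in\mathcal T_D(q)$. Since every element of $\mathcal T_D(q)$ is at most $q$, its maximum is $q$.

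As a cross-check, the same conclusion follows from the boxed formula \eqref{eq:main-s} via $\min\{q,R_D(q)+1\}$. If $D\preceqtwo q$, then $R_D(q)=q$. If instead $D\preceqtwo(q-1)$, then $R_D(q)\ge q-1$, so $R_D(q)+1\ge q$. In both cases the minimum equals $q$.

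There is no real obstacle here. The only point needing care is confirming that $q\ge1$, so that $t=q$ lies in the admissible range of $\mathcal T_D(q)$; this is supplied by $d\le n$.
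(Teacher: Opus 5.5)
Your proposal is correct and follows the paper's own argument: the hypothesis places $q$ in $\mathcal T_D(q)$, so $\tau_D(q)=q$, and Theorem~\ref{thm:main-s} then gives the formula. Your explicit check that $q\ge D\ge 1$ (from $hD\le n<h(q+1)$) and your cross-check via $\min\{q,R_D(q)+1\}=q$ are correct details that the paper leaves implicit.
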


\begin{proof}
The condition means exactly that the boundary at $hq$ is a transition, so
$\tau_D(q)=q$.  Apply Theorem~\ref{thm:main-s}.
\end{proof}

\begin{corollary}[Power-of-two degree]\label{cor:power2}
The following result was previously proved as Theorem~3.19 in
\cite{ZhangLiAdeyeye2021}.  If $d=2^k$ and $n\ge d$, then
\[
 s(\sigma_{n,2^k})
 =
 2^k\left\lfloor\frac{n}{2^k}\right\rfloor .
\]
\end{corollary}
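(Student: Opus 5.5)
We treat Corollary~\ref{cor:power2} as the special case $D=1$ of Theorem~\ref{thm:main-s}, splitting off $k=0$ first. If $k=0$, then $d=1$ is odd, so Theorem~\ref{thm:odd} gives $s(\sigma_{n,1})=n=2^0\lfloor n/2^0\rfloor$, as claimed. For the rest of the argument we assume $k\ge1$. Then $d=2^k$ is even, and in the notation of Theorem~\ref{thm:main-s} we have $h=2^k$, $D=1$, and $n=hq+u$ with $q=\lfloor n/2^k\rfloor$ and $0\le u<h$. The hypothesis $n\ge d$ gives $q\ge1=D$, so $R_1(q)$ is defined.

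Next we compute $R_1(q)$. Every integer $r\ge1$ satisfies $1\preceqtwo r$ exactly when $r$ is odd. Hence $R_1(q)=q$ if $q$ is odd and $R_1(q)=q-1$ if $q$ is even. In both cases $R_1(q)+1\ge q$, so $\min\{q,R_1(q)+1\}=q$.

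The same fact follows from Corollary~\ref{cor:end-transition}, since one of $q$ and $q-1$ is odd and therefore contains the bit $1$. Either route gives
\[
 s(\sigma_{n,2^k})=\max\{n-2^k+1,\ 2^kq\}.
\]

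It remains to show that the second term dominates, and this is the only step needing care. We have
\[
 2^kq=n-u\ge n-(2^k-1)=n-2^k+1,
\]
because $u\le h-1=2^k-1$. Therefore the maximum equals $2^kq=2^k\lfloor n/2^k\rfloor$, which is the claimed formula.
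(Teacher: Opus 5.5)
Your proof is correct and follows the paper's argument. Like the paper, you specialize Theorem~\ref{thm:main-s} to $D=1$, use that one of $q$ and $q-1$ is odd to get $\tau=q$, and then use $hq-(n-h+1)=h-u-1\ge 0$. Your separate treatment of $k=0$ via Theorem~\ref{thm:odd} is a small improvement: Theorem~\ref{thm:main-s} assumes $d$ is even, and the paper's proof passes over that case without comment.
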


\begin{proof}
Here $D=1$ and $h=2^k$.  Exactly one of $q$ and $q-1$ is odd, so
$1\preceqtwo q$ or $1\preceqtwo(q-1)$.  Hence the last boundary is a
transition and $h\tau_D(q)=hq$.

It remains to compare $hq$ with $n-h+1$.  Since $n=hq+u$ and
$0\le u<h$,
\[
 hq-(n-h+1)=h-u-1\ge0.
\]
Theorem~\ref{thm:main-s} therefore gives
\[
 s(\sigma_{n,2^k})=hq
 =2^k\left\lfloor\frac{n}{2^k}\right\rfloor.
\]
\end{proof}

\subsection{Examples}

\begin{example}[Degree $6$]
Let $d=6=2\cdot3$, so $h=2$ and $D=3$.
The $1$-blocks are indexed by integers whose last two binary digits are
$11$, namely
\[
 3,7,11,15,\ldots.
\]
Write $n=8a+r$, $0\le r\le7$.  The last transition occurs at weight
$8a$ when $0\le r\le5$, at weight $n$ when $r=6$, and at weight
$n-1$ when $r=7$.  Theorem~\ref{thm:main-s} therefore gives
\[
 s(\sigma_{n,6})
 =
 \begin{cases}
 8a,&0\le r\le5,\\
 n,&r=6,\\
 n-1,&r=7.
 \end{cases}
\]
This is exactly the previously obtained degree-$6$ formula in
\cite{ZhangLiAdeyeye2021}, now recovered from the arbitrary-degree theorem.
\end{example}

\begin{example}[Degree $10$]
Let $d=10=2\cdot5$, so $h=2$ and $D=5=(101)_2$.
The $1$-blocks are precisely those $t$ whose binary expansion contains the
first and third least significant $1$-bits.  Theorem~\ref{thm:main-s}
therefore replaces the separate residue-class analysis modulo $16$ by the
single binary test $5\preceqtwo t$.
\end{example}

\begin{example}[Degree $12$]
Let $d=12=4\cdot3$, so $h=4$ and $D=3$.
The value sequence is constant on blocks of four consecutive Hamming
weights, and the $t$-block has value $1$ exactly when
$3\preceqtwo t$.  Again Theorem~\ref{thm:main-s} gives the sensitivity
directly.
\end{example}

\section{Average sensitivity for arbitrary degree}

The same transition set, together with the standard interpretation of
average sensitivity as twice the normalized number of sensitive hypercube
edges, gives a compact formula for elementary symmetric Boolean functions.

\begin{theorem}\label{thm:average}
Let $1\le d\le n$, write
\[
 d=hD,\qquad h=2^{\nu_2(d)},\qquad D\ {\rm odd},
\]
and write $n=hq+u$, $0\le u<h$.  Then
\begin{equation}\label{eq:average}
 \boxed{
 \as(\sigma_{n,d})
 =
 2^{1-n}
 \sum_{t\in\mathcal T_D(q)}
 ht\binom{n}{ht}.}
\end{equation}
Equivalently,
\begin{equation}\label{eq:average2}
 \as(\sigma_{n,d})
 =
 2^{1-n}
 \sum_{t\in\mathcal T_D(q)}
 (n-ht+1)\binom{n}{ht-1}.
\end{equation}
\end{theorem}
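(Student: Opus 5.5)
We count sensitive edges of the hypercube. Each unordered edge $\{x,x\oplus e_i\}$ on which $\sigma_{n,d}$ changes value contributes exactly $1$ to $s(\sigma_{n,d};x)$ and exactly $1$ to $s(\sigma_{n,d};x\oplus e_i)$. Hence
\[
 \sum_{x\in\F^n}s(\sigma_{n,d};x)=2E,
\]
where $E$ is the number of sensitive edges, and so $\as(\sigma_{n,d})=2^{1-n}E$. It therefore suffices to compute $E$.

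Every edge joins a vector of some weight $j-1$ to a vector of weight $j$, for some $1\le j\le n$. Such an edge is sensitive exactly when $v_{j-1}\ne v_j$. By Lemma~\ref{lem:block}, the value sequence is constant on each block $B_t$. Hence $v_{j-1}\ne v_j$ can only hold when $j=ht$ for some $t\ge1$, and then it holds iff $\chi_D(t)\ne\chi_D(t-1)$. We also need $ht\le n=hq+u$ with $u<h$, which is equivalent to $t\le q$. So the weights $j$ at which a transition occurs are exactly $j=ht$ with $t\in\mathcal T_D(q)$, as defined in \eqref{eq:transition-set}.

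This argument does not require $d$ to be even. When $h=1$, the blocks are single weights and the statement is the same.

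Next we count the edges between weights $ht-1$ and $ht$. Counting from the upper endpoint, each of the $\binom{n}{ht}$ vectors of weight $ht$ has $ht$ neighbours of weight $ht-1$, obtained by flipping one of its $1$-coordinates. This gives $ht\binom{n}{ht}$ edges. Summing over $t\in\mathcal T_D(q)$ gives $E$, and multiplying by $2^{1-n}$ gives \eqref{eq:average}.

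Counting instead from the lower endpoint, each vector of weight $ht-1$ has $n-ht+1$ neighbours of weight $ht$, obtained by flipping one of its $0$-coordinates. This gives $(n-ht+1)\binom{n}{ht-1}$ edges. Alternatively, this equals $ht\binom{n}{ht}$ by the identity $k\binom{n}{k}=(n-k+1)\binom{n}{k-1}$. Either way we obtain \eqref{eq:average2}.

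The only delicate point is the range bookkeeping: we must make sure every sensitive edge is counted exactly once, and that transitions are restricted to $t\le q$ because $n<h(q+1)$. Once the transition set is identified with the weights $ht$, $t\in\mathcal T_D(q)$, the rest is routine.
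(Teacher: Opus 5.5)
Your proposal is correct and follows essentially the same route as the paper. Both arguments double-count sensitive edges, use Lemma~\ref{lem:block} to place transitions at weights $ht$ with $t\in\mathcal T_D(q)$, and count the edges between levels $ht-1$ and $ht$ from either endpoint (equivalently via $k\binom{n}{k}=(n-k+1)\binom{n}{k-1}$). Your check that $ht\le n$ is equivalent to $t\le q$ corresponds to the paper's remark about the truncated final block.
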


\begin{proof}
Recall that the average sensitivity is
\[
 \as(\sigma_{n,d})=2^{-n}\sum_{x\in\F^n}s(\sigma_{n,d};x).
\]
It is convenient to count the same quantity by edges of the Boolean cube.
An edge joining a vector of weight $k-1$ to a vector of weight $k$ is
sensitive exactly when the two Hamming-weight values differ, that is, when
$v_{k-1}\ne v_k$.

Fix such a transition between levels $k-1$ and $k$.  Every vector of weight
$k$ has exactly $k$ neighbors of weight $k-1$, so the number of edges
between these two levels is
\[
 k\binom{n}{k}.
\]
Equivalently, every vector of weight $k-1$ has $n-k+1$ neighbors of weight
$k$, giving the identity
\[
 k\binom{n}{k}
 =
 (n-k+1)\binom{n}{k-1}.
\]
Each sensitive edge is counted once in the sensitivity of each endpoint.
Consequently its total contribution to
$\sum_x s(\sigma_{n,d};x)$ is $2$, and hence
\[
 \sum_{x\in\F^n}s(\sigma_{n,d};x)
 =
 2\sum_{\substack{1\le k\le n\\ v_{k-1}\ne v_k}}
 k\binom{n}{k}.
\]

We now use the binary block structure.  By Lemma~\ref{lem:block}, the value
sequence is constant inside every $h$-block.  Thus transitions can occur
only at weights $k=ht$.  The boundary at $ht$ is a transition exactly when
the two adjacent block values differ, which is precisely the condition
$t\in\mathcal T_D(q)$.  Therefore the preceding sum becomes
\[
 \sum_{x\in\F^n}s(\sigma_{n,d};x)
 =2\sum_{t\in\mathcal T_D(q)}
 ht\binom{n}{ht}.
\]
This description also covers the case in which the final $q$-block is
incomplete: if $hq\le n$, its left boundary between weights $hq-1$ and
$hq$ is present and is included when $q\in\mathcal T_D(q)$, while there is
no boundary after weight $n$ and hence no extra term.

Dividing by $2^n$ gives \eqref{eq:average}.  Finally, applying
\[
 ht\binom{n}{ht}
 =
 (n-ht+1)\binom{n}{ht-1}
\]
term by term gives the equivalent expression \eqref{eq:average2}.
\end{proof}

\begin{remark}
Formula \eqref{eq:average} is valid for both odd and even degrees.  When
$d$ is odd, $h=1$; the difference from ordinary sensitivity is that the
average sensitivity depends on all transitions rather than only the
largest local value.
\end{remark}

\section{Block sensitivity from binary constant intervals}

We now apply the binary block structure to block sensitivity.  A general
run-based formula for block sensitivity of symmetric Boolean functions was
obtained in \cite{ZhangLiAdeyeye2021}.  A closely related local formulation,
together with the reduction to sensitive blocks consisting entirely of
$1$-coordinates or entirely of $0$-coordinates, also appears as Lemmas 8 and
9 of Mittal, Nair, and Patro \cite{MittalNairPatro2024}.  We include the
local argument below for completeness and to fix notation; it is not claimed
as a new result.  The new work in this section is the use of the special
binary structure of $\sigma_{n,d}$ to obtain an explicit arbitrary-degree
reduction.

Let $f$ be a symmetric Boolean function and suppose that
\[
 [A,B]\subseteq\{0,1,\ldots,n\}
\]
is the maximal constant interval containing the Hamming weight $w$.

\begin{lemma}\label{lem:local-bs}
For $A\le w\le B$,
\begin{equation}\label{eq:local-bs}
 \bs(f;w)
 =
 \left\lfloor\frac{w}{w-A+1}\right\rfloor
 +
 \left\lfloor\frac{n-w}{B-w+1}\right\rfloor ,
\end{equation}
where the first term is $0$ when $A=0$ and the second term is $0$ when
$B=n$.
\end{lemma}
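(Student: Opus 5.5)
Fix $x$ of weight $w\in[A,B]$, with $P$ its set of $w$ one-coordinates and $Z$ its set of $n-w$ zero-coordinates. A block $S$ changes the weight by $|S\cap Z|-|S\cap P|$. Since $f$ is symmetric, $S$ is sensitive if and only if the new weight $w-|S\cap P|+|S\cap Z|$ lies outside $[A,B]$. We will show that the right-hand side of \eqref{eq:local-bs} is both achievable and an upper bound.

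\emph{Lower bound.} Assume $A\ge1$. Split $P$ into $\lfloor w/(w-A+1)\rfloor$ disjoint blocks of $w-A+1$ ones each. Flipping such a block gives weight $A-1$, which lies outside $[A,B]$, so each block is sensitive. Similarly, if $B<n$, split $Z$ into $\lfloor (n-w)/(B-w+1)\rfloor$ disjoint blocks of $B-w+1$ zeros each. Flipping one gives weight $B+1$, again outside the interval. The blocks from $P$ and from $Z$ are pairwise disjoint, so their counts add. When $A=0$ no set of ones can push the weight below $0$, so the first family is empty; likewise the second is empty when $B=n$. This matches the stated convention.

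\emph{Upper bound.} Take any family of pairwise disjoint sensitive blocks $S_1,\dots,S_m$, and write $p_i=|S_i\cap P|$ and $z_i=|S_i\cap Z|$. Each $S_i$ satisfies either
\[
 p_i-z_i\ge w-A+1 \quad\text{(weight drops below $A$)}
\]
or
\[
 z_i-p_i\ge B-w+1 \quad\text{(weight rises above $B$)}.
\]
Call these ``down'' and ``up'' blocks. A down block has $p_i\ge w-A+1$, and disjointness gives $\sum p_i\le w$. So the number of down blocks is at most $\lfloor w/(w-A+1)\rfloor$, and it is $0$ when $A=0$, since then $p_i-z_i\ge w+1$ is impossible. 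In the same way, up blocks satisfy $z_i\ge B-w+1$ with $\sum z_i\le n-w$, so there are at most $\lfloor (n-w)/(B-w+1)\rfloor$ of them, and none if $B=n$. Adding the two bounds gives the upper bound.

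The main point to check is this upper bound. Mixed blocks only waste coordinates: the key inequality $p_i\ge p_i-z_i$ already gives the bound without first reducing to pure blocks, as Mittal--Nair--Patro do. The remaining care is the endpoint conventions $A=0$ and $B=n$. There the relevant direction has no sensitive blocks, and the displayed floor would otherwise read $\lfloor w/(w+1)\rfloor=0$ anyway.
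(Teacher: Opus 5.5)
Your argument is correct and is essentially the paper's proof. The paper first shrinks each sensitive block to a pure subblock of $w-A+1$ ones or $B-w+1$ zeros and then counts; you count directly from $p_i\ge p_i-z_i\ge w-A+1$, which is the same key inequality, so the difference is only cosmetic.

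One sentence needs repair. The claim ``$S$ is sensitive if and only if the new weight lies outside $[A,B]$'' is false in general, because the value sequence may return to $v_w$ farther away. For example, take the value sequence $(0,0,1,0,0)$ and $w=0$, so $[A,B]=[0,1]$. Flipping three zeros reaches weight $3\notin[0,1]$, yet the value does not change. Only the forward implication holds (sensitive $\Rightarrow$ outside $[A,B]$), and that is all your upper bound uses. Your lower bound is still valid, but the justification should be different. Your blocks land \emph{exactly} at weights $A-1$ and $B+1$, and maximality of $[A,B]$ gives $v_{A-1}\ne v_A$ and $v_{B+1}\ne v_B$. This is the justification the paper gives.
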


\begin{proof}
Fix a vector $x$ of Hamming weight $w$, with $A\le w\le B$, and let a
sensitive block contain $a$ coordinates equal to $1$ in $x$ and $b$
coordinates equal to $0$ in $x$.  After flipping that block, the Hamming
weight becomes
\[
 w-a+b.
\]
Since $[A,B]$ is the maximal constant interval containing $w$, sensitivity
of the block means that either
\[
 w-a+b\le A-1
\]
or
\[
 w-a+b\ge B+1.
\]

Suppose first that
\[
 w-a+b\le A-1.
\]
Then
\[
 a-b\ge w-A+1,
\]
and therefore
\[
 a\ge w-A+1.
\]
Choose any $w-A+1$ of the $a$ coordinates equal to $1$ that belong to the
block and flip only those coordinates.  The resulting Hamming weight is
exactly $A-1$, so the value of the function changes.  Hence every sensitive
block that moves below the interval can be replaced by a pure downward
sensitive block consisting of exactly $w-A+1$ coordinates equal to $1$.

Likewise, if
\[
 w-a+b\ge B+1,
\]
then
\[
 b-a\ge B-w+1,
\]
and hence
\[
 b\ge B-w+1.
\]
Choosing any $B-w+1$ of the zero-coordinates in the block gives a pure
upward sensitive block that moves the Hamming weight exactly to $B+1$.

Now start with any family of pairwise disjoint sensitive blocks.
Replace every downward block by such a pure downward subblock and every
upward block by such a pure upward subblock.  Because each replacement is
a subset of the original block, pairwise disjointness is preserved.  Thus
there is an optimal family in which every sensitive block is either

\begin{itemize}
\item a pure downward block of size $w-A+1$, using only coordinates equal
to $1$, or
\item a pure upward block of size $B-w+1$, using only coordinates equal
to $0$.
\end{itemize}

The maximum possible number of pairwise disjoint downward blocks is
therefore
\[
 \left\lfloor\frac{w}{w-A+1}\right\rfloor,
\]
and the maximum possible number of pairwise disjoint upward blocks is
\[
 \left\lfloor\frac{n-w}{B-w+1}\right\rfloor.
\]
These two families use disjoint types of coordinates, so they can be chosen
simultaneously.  Hence
\[
 \bs(f;w)
 =
 \left\lfloor\frac{w}{w-A+1}\right\rfloor
 +
 \left\lfloor\frac{n-w}{B-w+1}\right\rfloor.
\]

If $A=0$, no sensitive block can move below the interval, so the first term
is omitted.  If $B=n$, no sensitive block can move above the interval, so
the second term is omitted.
\end{proof}

\begin{proposition}\label{prop:run-bs}
Let $[A,B]$ be a maximal constant interval of a symmetric Boolean function
$f$, and put $L=B-A+1$.  The largest block sensitivity attained at a
Hamming weight in this interval occurs at one of its two endpoints.
Consequently its contribution to $\bs(f)$ is
\begin{equation}\label{eq:run-bs}
 M(A,B)
 =
 \max\left\{
 A+\left\lfloor\frac{n-A}{L}\right\rfloor,\,
 n-B+\left\lfloor\frac{B}{L}\right\rfloor
 \right\},
\end{equation}
with the obvious endpoint interpretation when $A=0$ or $B=n$.
\end{proposition}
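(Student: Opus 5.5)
We use Lemma~\ref{lem:local-bs} and study, on the interval $[A,B]$, the function
\[
 g(w)=\left\lfloor\frac{w}{w-A+1}\right\rfloor+\left\lfloor\frac{n-w}{B-w+1}\right\rfloor,\qquad A\le w\le B.
\]
The plan is to show that $g(w)\le\max\{g(A),g(B)\}$. We then evaluate $g$ at the two endpoints.

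First, the endpoint values. At $w=A$ the first term is $\lfloor A/1\rfloor=A$, or $0$ when $A=0$, which agrees. The second term is $\lfloor (n-A)/(B-A+1)\rfloor=\lfloor (n-A)/L\rfloor$. At $w=B$ the first term is $\lfloor B/L\rfloor$ and the second is $n-B$. These are exactly the two expressions in \eqref{eq:run-bs}. If $B=n$, the upward term vanishes, consistent with $\lfloor 0/L\rfloor$ and $n-B=0$. If $A=0$, the downward term vanishes, and the endpoint interpretation is the stated one.

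Second, the interior bound. Put $i=w-A\in[0,L-1]$, so that $B-w+1=L-i$. The first term becomes $\lfloor (A+i)/(i+1)\rfloor$. For $i\ge1$ this is at most $\lfloor (A+i)/2\rfloor$, and more crudely it is at most $A$ when $A\ge1$. The second term becomes $\lfloor (n-A-i)/(L-i)\rfloor$. I would avoid a convexity argument on the floors and instead use a direct comparison. Each downward block at weight $w$ uses at least $i+1$ ones, and each upward block uses at least $L-i$ zeros. Write $\alpha=g_1(w)$ and $\beta=g_2(w)$ for the two terms. Then $\alpha(i+1)\le A+i$ and $\beta(L-i)\le n-A-i$.

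The aim is to show $\alpha+\beta\le\max\{A+\lfloor (n-A)/L\rfloor,\ n-B+\lfloor B/L\rfloor\}$. I will split into cases according to whether $\alpha\le A$ gives enough room.

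Case 1: $\beta\le\lfloor (n-A)/L\rfloor$. Since $\alpha\le A$ (because $i+1\ge1$), we get $g(w)\le g(A)$ immediately.

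Case 2: $\beta>\lfloor (n-A)/L\rfloor$. By the symmetric argument it then suffices to show $\alpha+\beta\le n-B+\lfloor B/L\rfloor$. Here $\beta\le n-w-(L-i-1)\beta$… I would instead compare counts of coordinates. Use $\beta(L-i)\le n-w$ and $\alpha(i+1)\le w$, and weigh the two inequalities. Summing gives $\alpha(i+1)+\beta(L-i)\le n$. If $\alpha\le\lfloor B/L\rfloor$, we need $\beta\le n-B$. This holds because $\beta\le (n-w)/(L-i)$ and $n-w-(n-B)=B-w=L-1-i$. Since $n-B\ge\beta$ whenever $(n-w)/(L-i)\le n-B+(L-1-i)/(L-i)$, the inequality holds provided $n-B\ge1$.

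The remaining subcase, where both terms exceed the corresponding endpoint pieces, is the main obstacle. There I would use the constraint $\alpha(i+1)+\beta(L-i)\le n$ together with $\alpha>B/L-1$ and $\beta>(n-A)/L-1$ to derive a contradiction when $1\le i\le L-2$. The case $i+1\ge2$ and $L-i\ge2$ makes the left side roughly at least $2(\alpha+\beta)$, while $\alpha+\beta$ already exceeds about $(n+L)/L$.

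If this case analysis becomes messy, a fallback is to treat $g(w)$ as a sum of a function of $i$ that is nonincreasing in spirit and one that is nondecreasing. We then check that $\lfloor (A+i)/(i+1)\rfloor$ drops by at least as much as the second term gains, until a crossing point is reached. This reduces the problem to the two endpoints. Either route yields the proposition.
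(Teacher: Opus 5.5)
\textbf{Verdict: there is a genuine gap.} Your endpoint evaluations and your first two cases are correct. If $\beta\le\lfloor (n-A)/L\rfloor$, then $g(w)\le g(A)$ because $\alpha\le A$. If $\alpha\le\lfloor B/L\rfloor$, then $g(w)\le g(B)$ because $\beta\le n-B$ when $B<n$. These two cases already settle the one-sided runs.

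The gap is the remaining case, where $\alpha>\lfloor B/L\rfloor$ and $\beta>\lfloor (n-A)/L\rfloor$ hold simultaneously. This case is not contradictory. For an interior run it is actually the generic situation. Indeed, $(A+i)/(i+1)$ is nonincreasing with value $B/L$ at $i=L-1$, and $(n-A-i)/(L-i)$ is nondecreasing with value $(n-A)/L$ at $i=0$. Hence $\alpha\ge\lfloor B/L\rfloor$ and $\beta\ge\lfloor (n-A)/L\rfloor$ always hold.

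A concrete instance: take $A=20$, $B=22$ (so $L=3$), $n=27$ and $w=21$. Then $\alpha=10>7$ and $\beta=3>2$, while $\alpha(i+1)+\beta(L-i)=26\le 27$. So none of your constraints is violated. Replacing $i+1$ and $L-i$ by $2$ only gives $\alpha+\beta\le n/2$. Against your lower bound $\alpha+\beta>(n+L-1)/L$, this produces a contradiction only when $(n-2)(L-2)\le 2$.

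The same example shows that your Case~2 target $\alpha+\beta\le n-B+\lfloor B/L\rfloor$ is false: $g(w)=13>12=g(B)$. Here the dominating endpoint is $A$, with $g(A)=22$. So no case split keyed on bounding the two terms separately can succeed; the two terms must be traded off against each other.

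Your fallback also fails as stated, because the floor sum need not be unimodal. For $A=13$, $B=25$, $n=34$, the values of $g$ at $w=17,18,19$ are $4,5,4$. This is an interior strict local maximum, so there is no single crossing point.

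The missing idea, which is the paper's argument, is to drop the floors and bound by the real relaxation
\[
 g(w)\le r(i):=\frac{A+i}{i+1}+\frac{n-A-i}{L-i}=2+\frac{A-1}{i+1}+\frac{n-B-1}{L-i}.
\]
For an interior run ($A\ge1$, $B\le n-1$), $r$ is convex on $[0,L-1]$, so $r(i)\le\max\{r(0),r(L-1)\}$. Since $g(w)$ is an integer, $g(w)\le\lfloor\max\{r(0),r(L-1)\}\rfloor$. At each endpoint one summand of $r$ is an integer ($A$, respectively $n-B$). Hence $\lfloor r(0)\rfloor=g(A)$ and $\lfloor r(L-1)\rfloor=g(B)$, which proves the claim.

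Your inequalities $\alpha(i+1)\le A+i$ and $\beta(L-i)\le n-A-i$ already encode exactly $\alpha+\beta\le r(i)$. What your argument is missing is the convexity step that compares $r(i)$ with its endpoint values.
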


\begin{proof}
Let
\[
 w=A+x,\qquad 0\le x\le L-1,
\]
where $L=B-A+1$.  By Lemma~\ref{lem:local-bs},
\[
 F(x):=\bs(f;A+x)
 =
 \left\lfloor\frac{A+x}{x+1}\right\rfloor
 +
 \left\lfloor\frac{n-A-x}{L-x}\right\rfloor.
\]
For an interior run, define
\[
 r(x)
 =
 \frac{A+x}{x+1}
 +
 \frac{n-A-x}{L-x}
 =
 2+\frac{A-1}{x+1}
  +\frac{n-B-1}{L-x}.
\]
The function $r$ is convex on $[0,L-1]$, since
\[
 r''(x)
 =
 \frac{2(A-1)}{(x+1)^3}
 +
 \frac{2(n-B-1)}{(L-x)^3}
 \ge0.
\]
Hence
\[
 r(x)\le\max\{r(0),r(L-1)\}.
\]
Since $F(x)\le r(x)$ and $F(x)$ is an integer,
\[
 F(x)
 \le
 \left\lfloor\max\{r(0),r(L-1)\}\right\rfloor.
\]
At the endpoints, one summand is an integer, so
\[
 \lfloor r(0)\rfloor=F(0),
 \qquad
 \lfloor r(L-1)\rfloor=F(L-1).
\]
Therefore
\[
 F(x)\le\max\{F(0),F(L-1)\}.
\]
Thus the maximum is attained at $w=A$ or $w=B$.  Evaluating
Lemma~\ref{lem:local-bs} at these endpoints gives
\[
 M(A,B)
 =
 \max\left\{
 A+\left\lfloor\frac{n-A}{L}\right\rfloor,\,
 n-B+\left\lfloor\frac{B}{L}\right\rfloor
 \right\}.
\]
The cases $A=0$ or $B=n$ are one-sided and follow directly from
Lemma~\ref{lem:local-bs}.
\end{proof}

\begin{proposition}[Run formula for arbitrary degree]\label{prop:bs-run}
Let $1\le d\le n$ and write
\[
 d=hD,\qquad h=2^{\nu_2(d)},\qquad D\ {\rm odd},
\]
and $n=hq+u$, $0\le u<h$.
Form the finite block-index sequence
\[
 \chi_D(0),\chi_D(1),\ldots,\chi_D(q),
 \qquad
 \chi_D(t)=\mathbf 1_{\{D\preceqtwo t\}},
\]
where the final $q$-block contains only the weights
$hq,\ldots,hq+u$.
Let
\[
 [A_1,B_1],\ldots,[A_m,B_m]
\]
be the maximal constant Hamming-weight intervals obtained by merging
consecutive blocks having the same value.  Then
\begin{equation}\label{eq:bs-general}
 \boxed{
 \bs(\sigma_{n,d})
 =
 \max_{1\le i\le m}
 M(A_i,B_i),}
\end{equation}
where $M(A,B)$ is given by \eqref{eq:run-bs}.
\end{proposition}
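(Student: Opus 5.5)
The proposition is essentially a bookkeeping consequence of Lemma~\ref{lem:block} combined with Proposition~\ref{prop:run-bs}. The plan is to first pin down the exact set of maximal constant intervals of the value sequence $v_0,\ldots,v_n$ of $\sigma_{n,d}$. Then we check that every Hamming weight lies in exactly one of them, and finally maximize.

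\textbf{Step 1: describe the intervals.} By Lemma~\ref{lem:block}, $v_j=\chi_D(t)$ whenever $j\in B_t$. The weights $0,\ldots,n$ are partitioned into the full blocks $B_0,\ldots,B_{q-1}$ together with the truncated block $\{hq,\ldots,hq+u\}$, which has at least one element since $u\ge0$. Each (possibly truncated) block is therefore a constant stretch. Two consecutive weights $j-1,j$ carry different values exactly when $j=ht$ with $\chi_D(t)\ne\chi_D(t-1)$, that is, when $t\in\mathcal T_D(q)$.

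It follows that the maximal constant intervals of the value sequence are precisely the unions of maximal runs of equal consecutive entries in $\chi_D(0),\ldots,\chi_D(q)$. Each such interval starts at weight $0$ or at some $ht$ with $t\in\mathcal T_D(q)$. It ends at weight $n$ or at the weight immediately preceding the next transition. This gives the list $[A_1,B_1],\ldots,[A_m,B_m]$, which partitions $\{0,\ldots,n\}$. The case $m=1$ cannot occur, because $\chi_D(D)=1$ while $\chi_D(0)=0$ and $D\le q$ (as $d\le n$).

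\textbf{Step 2: reduce to intervals.} Block sensitivity at $x$ depends only on $\wt(x)$, by symmetry. Hence
\[
\bs(\sigma_{n,d})=\max_{0\le w\le n}\bs(\sigma_{n,d};w)=\max_{1\le i\le m}\ \max_{w\in[A_i,B_i]}\bs(\sigma_{n,d};w).
\]
Lemma~\ref{lem:local-bs} applies to each $w$ using the maximal interval $[A_i,B_i]$ that contains it. Proposition~\ref{prop:run-bs} then gives that the inner maximum equals $M(A_i,B_i)$. This yields \eqref{eq:bs-general}.

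\textbf{Main obstacle.} The only delicate point is the boundary behaviour of the intervals, namely the first interval (where $A_1=0$) and the last interval (where $B_m=n$, and the last block may be truncated). One must confirm that the ``obvious endpoint interpretation'' in Proposition~\ref{prop:run-bs} matches the one-sided forms of Lemma~\ref{lem:local-bs} in both cases.

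For the last interval, maximality relies on there being no weight beyond $n$. For the first interval, when $A=0$ the term $\lfloor w/(w-A+1)\rfloor$ must simply be dropped. The convexity argument was only carried out for interior runs, so the one-sided cases need separate justification. I would handle them by checking directly that each one-sided formula is monotone in $w$: for $A=0$, the quantity $\lfloor (n-w)/(B-w+1)\rfloor$ is nonincreasing in $w$, so its maximum over the interval is attained at $w=B$. The case $B=n$ is symmetric. This recovers the endpoint values used in $M$.
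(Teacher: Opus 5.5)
Your proposal is correct and follows the paper's proof: read off the maximal constant intervals from Lemma~\ref{lem:block}, then apply Proposition~\ref{prop:run-bs} to each interval and maximize. Your extra check of the one-sided intervals usefully fills in what the paper dismisses in one line, but there is a wording slip. For $A=0$ the quantity $\lfloor (n-w)/(B-w+1)\rfloor=1+\lfloor (n-B-1)/(B-w+1)\rfloor$ is nondecreasing in $w$, not nonincreasing, and that is exactly why its maximum $n-B$ is attained at $w=B$.
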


\begin{proof}
Lemma~\ref{lem:block} determines the complete Hamming-weight value sequence
of $\sigma_{n,d}$ from the binary condition $D\preceqtwo t$.  Hence it
determines all maximal constant intervals $[A_i,B_i]$.  Apply
Proposition~\ref{prop:run-bs} to each interval and take the maximum.
\end{proof}

We now simplify Proposition~\ref{prop:bs-run} considerably.  The key point
is that internal zero-runs are dominated by their neighboring $1$-runs, and
among complete $1$-runs only the first and the last can be extremal.

\begin{lemma}\label{lem:zero-run-dominated}
Suppose $r<s\le q$ are consecutive elements of the set
\[
 \{\,t\ge0:D\preceqtwo t\,\}.
\]
Thus
\[
 D\preceqtwo r,\qquad D\preceqtwo s,
\]
and there is no integer $t$ with $r<t<s$ for which $D\preceqtwo t$.
Then the zero-run between the corresponding $1$-blocks is
\[
 [h(r+1),hs-1].
\]
The block-sensitivity contribution of this zero-run is no larger than the
maximum of the contributions of the two adjacent $1$-runs.
\end{lemma}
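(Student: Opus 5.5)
The plan is to compare the two endpoint expressions in \eqref{eq:run-bs} for the zero-run directly with suitable endpoint values of the two neighbouring $1$-runs. The comparison uses only Lemma~\ref{lem:local-bs} and the isolation property of Lemma~\ref{lem:isolated}.

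\emph{Setup.} Since $D\preceqtwo r$ and $D\preceqtwo s$, Lemma~\ref{lem:isolated} says the $1$-runs are single blocks:
\[
 [hr,\,hr+h-1]\qquad\text{and}\qquad[hs,\,\min\{hs+h-1,n\}].
\]
The latter is truncated exactly when $s=q$. Also $r\ge D\ge1$, so $hr\ge1$. The blocks strictly between them have value $0$, so the zero-run is $[A,B]=[h(r+1),hs-1]$. Its length is
\[
 L_0=h(s-r-1)\ge h.
\]
Indeed $r$ and $s$ are both odd, so $s-r\ge2$. Moreover $A\ge h\ge1$ and $B\le hq-1<n$, so the zero-run is interior and both terms of $M(A,B)$ are genuine.

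\emph{Step 1 (first term vs.\ the right $1$-run).} We show
\[
 A+\left\lfloor\frac{n-A}{L_0}\right\rfloor\le \bs(f;hs).
\]
At $w=hs$, the left endpoint of the right $1$-run, Lemma~\ref{lem:local-bs} gives the value $hs+\lfloor (n-hs)/h\rfloor$. If the run is truncated, the upward term is absent, but then $n-hs\le u<h$, so the same expression holds. Since $n-A=(n-hs)+L_0$, we get
\[
 \left\lfloor\frac{n-A}{L_0}\right\rfloor=1+\left\lfloor\frac{n-hs}{L_0}\right\rfloor\le 1+\left\lfloor\frac{n-hs}{h}\right\rfloor,
\]
using $L_0\ge h$. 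Combined with $A+1\le B+1=hs$, this gives the claim.

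\emph{Step 2 (second term vs.\ the left $1$-run).} Symmetrically, we show
\[
 n-B+\left\lfloor\frac{B}{L_0}\right\rfloor\le \bs(f;A-1).
\]
Here $A-1=hr+h-1$ is the right endpoint of the left $1$-run. Lemma~\ref{lem:local-bs} gives $\bs(f;A-1)=(n-A+1)+\lfloor (A-1)/h\rfloor$, since the downward block size there is $h$ and the upward block size is $1$. Writing $B=(A-1)+L_0$, we get $\lfloor B/L_0\rfloor=1+\lfloor (A-1)/L_0\rfloor\le 1+\lfloor (A-1)/h\rfloor$. Together with $n-B\le n-A$, this gives the claim.

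\emph{Conclusion and main obstacle.} Since $M(A,B)$ is the maximum of these two terms, and each is bounded by a block sensitivity attained in an adjacent $1$-run, the lemma follows. The step needing the most care is Step~1 when $s=q$, where the right $1$-run is truncated and one-sided. There I would check separately that Lemma~\ref{lem:local-bs} still yields at least $hs$ at weight $hs$, which suffices because $\lfloor (n-hs)/L_0\rfloor=0$ in that case. Otherwise the argument is the routine floor manipulation above, driven by $L_0\ge h$.
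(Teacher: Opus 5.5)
Your proof is correct and is essentially the paper's argument: you bound the left-endpoint term of the zero-run by the block sensitivity at weight $hs$ (the left end of the following $1$-run), and the right-endpoint term by the block sensitivity at weight $h(r+1)-1$ (the right end of the preceding $1$-run), using only that the gap length $L_0=h(s-r-1)$ is at least $h$. Your splittings $n-A=(n-hs)+L_0$ and $B=(A-1)+L_0$ are a tidier form of the paper's floor estimates via $q-s+1$ and $s-1=r+g$, and you treat the case $s=q$ explicitly where the paper covers it implicitly through ``at least $q+(h-1)s$''.
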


\begin{proof}
Put
\[
 g=s-r-1\ge1.
\]
Then the zero-run has length $hg$.

At its left endpoint $A=h(r+1)$, Proposition~\ref{prop:run-bs} gives the
candidate
\[
 Z_L
 =
 h(r+1)+
 \left\lfloor
 \frac{n-h(r+1)}{hg}
 \right\rfloor.
\]
Since $n=hq+u$ with $0\le u<h$ and
$s=r+g+1$, we have
\[
 n-h(r+1)
 =
 h(q-s+g)+u.
\]
Therefore
\[
 \left\lfloor
 \frac{n-h(r+1)}{hg}
 \right\rfloor
 \le q-s+1.
\]
It follows that
\[
 Z_L
 \le h(r+1)+q-s+1.
\]
On the other hand, the left-endpoint contribution of the following
$1$-run, indexed by $s$, is at least
\[
 q+(h-1)s.
\]
Indeed,
\[
 q+(h-1)s-\bigl(h(r+1)+q-s+1\bigr)
 =
 hg-1\ge0.
\]
Hence the left endpoint of the zero-run is dominated by the following
$1$-run.

At the right endpoint $B=hs-1$, the zero-run candidate is
\[
 Z_R
 =
 n-hs+1+
 \left\lfloor
 \frac{hs-1}{hg}
 \right\rfloor.
\]
Because $g\ge1$,
\[
 \left\lfloor
 \frac{hs-1}{hg}
 \right\rfloor
 \le
 \left\lfloor
 \frac{hs-1}{h}
 \right\rfloor
 =
 s-1
 =
 r+g
 \le r+hg.
\]
Thus
\[
 Z_R
 \le
 n-hs+1+r+hg
 =
 n-h+1-(h-1)r.
\]
The right-hand side is exactly the right-endpoint contribution of the
preceding $1$-run indexed by $r$.

Therefore both endpoint candidates of the internal zero-run are dominated
by adjacent $1$-runs.  By Proposition~\ref{prop:run-bs}, the zero-run
cannot determine the global block sensitivity.
\end{proof}

\begin{definition}\label{def:final-candidate}
Under the notation
\[
 d=hD,\qquad n=hq+u,\qquad 0\le u<h,
\]
define the final-run candidate $E_{n,d}$ by
\[
 E_{n,d}
 =
 \begin{cases}
 \displaystyle
 \max\left\{hq,\left\lfloor\frac{n}{u+1}\right\rfloor\right\},
 & D\preceqtwo q,\\[4mm]
 \displaystyle
 \max\left\{A,\left\lfloor\frac{n}{n-A+1}\right\rfloor\right\},
 & D\npreceqtwo q,
 \end{cases}
\]
where, in the second case,
\[
 r=R_D(q),\qquad A=h(r+1).
\]
\end{definition}

The endpoint case $d=n$ is immediate from the previously established identity $s(\sigma_{n,n})=n$ \cite{ZhangLiAdeyeye2021} and the standard inequalities $s(f)\le\bs(f)\le n$; hence $\bs(\sigma_{n,n})=n$.

\begin{theorem}[Explicit block sensitivity for arbitrary degree]
\label{thm:bs-explicit}
Let $1\le d\le n$, and write
\[
 d=hD,\qquad h=2^{\nu_2(d)},\qquad D\ {\rm odd},
\]
and
\[
 n=hq+u,\qquad 0\le u<h.
\]
Put
\[
 I=n-d+1.
\]
If $q>D$, also put
\[
 \rho=R_D(q-1),\qquad
 P=n-h+1-(h-1)D,\qquad
 Q=q+(h-1)\rho.
\]
Then
\[
 \boxed{
 \bs(\sigma_{n,d})
 =
 \begin{cases}
 \max\{I,E_{n,d}\},&q=D,\\[2mm]
 \max\{I,E_{n,d},P,Q\},&q>D.
 \end{cases}}
\]
Thus the block sensitivity is determined by at most four candidates,
coming from the initial run, the final run, the first complete $1$-run,
and the last complete $1$-run.
\end{theorem}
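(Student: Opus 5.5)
Our starting point is Proposition~\ref{prop:bs-run}: $\bs(\sigma_{n,d})$ is the maximum of $M(A_i,B_i)$ over the maximal constant intervals. We will sort these intervals into types and show that every type other than the four named candidates is dominated by one of them. The intervals are as follows.
\begin{itemize}
\item The initial zero-run $[0,d-1]$, which has $A=0$. Its one-sided contribution is the right-endpoint value $n-(d-1)+0=I$.
\item The final run, i.e.\ the one containing weight $n$, which has $B=n$.
\item The complete $1$-runs $[hr,hr+h-1]$ with $D\preceqtwo r$ and $r<q$.
\item The internal zero-runs between consecutive $1$-blocks.
\end{itemize}
By Lemma~\ref{lem:isolated} the $1$-runs have length exactly $h$, except possibly the final one. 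Lemma~\ref{lem:zero-run-dominated} already disposes of the internal zero-runs, so it remains to handle the final run and the complete $1$-runs.

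\emph{Final run.} Here Proposition~\ref{prop:run-bs} reduces to the left-endpoint term $A+\lfloor (n-A)/L\rfloor$ together with the interior analysis. Since $B=n$, Lemma~\ref{lem:local-bs} gives $\bs(f;w)=\lfloor w/(w-A+1)\rfloor$, which is maximized at $w=A$ (value $A$) or at $w=n$ (value $\lfloor n/(n-A+1)\rfloor$). We split into two cases.
\begin{itemize}
\item If $D\preceqtwo q$, the final run is $[hq,n]$ with $n-A+1=u+1$. This gives $\max\{hq,\lfloor n/(u+1)\rfloor\}$.
\item If $D\npreceqtwo q$, the final run is the zero-run starting at $A=h(R_D(q)+1)$. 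Lemma~\ref{lem:isolated} guarantees that the $R_D(q)$-block has $1$-value, so this run really begins there.
\end{itemize}
In both cases the contribution is $E_{n,d}$. One degenerate point needs care: when $q=D$ and $D\preceqtwo q$, the final run is also the first $1$-run, so there are no complete $1$-runs. We will also check that the formula is consistent at $d=n$, which was treated separately.

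\emph{Complete $1$-runs.} For a $1$-run indexed by $r$ with $r\le q-1$ and $L=h$, the two endpoint values are
\[
\ell(r)=hr+\Bigl\lfloor\frac{n-hr}{h}\Bigr\rfloor=hr+q-r=q+(h-1)r
\qquad\text{and}\qquad
\rho(r)=n-hr-h+1+\Bigl\lfloor\frac{hr+h-1}{h}\Bigr\rfloor=n-h+1-(h-1)r .
\]
The first is increasing in $r$ and the second is decreasing in $r$, since $h\ge1$. Hence the maximum over all complete $1$-runs is attained by $\ell$ at the largest such $r$ and by $\rho$ at the smallest.
\begin{itemize}
\item The smallest index is $r=D$, which is complete precisely when $D\le q-1$, i.e.\ $q>D$. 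It gives $\rho(D)=P$.
\item The largest index is $R_D(q-1)=\rho$, which gives $Q$.
\end{itemize}
When $q=D$ there are no complete $1$-runs, and only $I$ and $E_{n,d}$ remain.

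\emph{Main obstacle.} The step most likely to need care is the bookkeeping at the boundaries. We must be sure that the ``complete'' runs are exactly those with index $\le q-1$. The $q$-block is truncated and is absorbed into the final run even when it is a $1$-block. We must also check the case $h=1$ (odd $d$): there every zero-run between $1$-runs may have length $1$, and Lemma~\ref{lem:zero-run-dominated} must still apply, which it does since $g\ge1$ is all it uses. Finally, the final zero-run in the case $D\npreceqtwo q$ is not internal, so its left endpoint must be counted in $E_{n,d}$ rather than dismissed by the domination lemma. Once these identifications are verified, taking the maximum over the candidates gives the boxed formula.
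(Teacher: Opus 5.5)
Your proposal is correct and follows essentially the same route as the paper: the initial run gives $I$, monotonicity of $q+(h-1)r$ and $n-h+1-(h-1)r$ reduces the complete $1$-runs to $P$ and $Q$, Lemma~\ref{lem:zero-run-dominated} removes the internal zero-runs, and the final run gives $E_{n,d}$. Two small slips: the final zero-run begins at $h(R_D(q)+1)$ because of the definition and maximality of $R_D(q)$, not because of Lemma~\ref{lem:isolated}; and you reuse $\rho$ both for $R_D(q-1)$ and for the right-endpoint function, so one of them should be renamed.
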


\begin{proof}
The initial constant interval is
\[
 [0,d-1].
\]
By Proposition~\ref{prop:run-bs}, its contribution is
\[
 n-d+1=I.
\]

Now consider a complete $1$-run indexed by an integer $r<q$ satisfying
$D\preceqtwo r$.  Since every $1$-block is isolated, this run is exactly
\[
 [hr,h(r+1)-1]
\]
and has length $h$.  Proposition~\ref{prop:run-bs} gives its contribution
as
\[
 \max\left\{
 hr+\left\lfloor\frac{n-hr}{h}\right\rfloor,\,
 n-h(r+1)+1+\left\lfloor\frac{h(r+1)-1}{h}\right\rfloor
 \right\}.
\]
Because $n=hq+u$ with $0\le u<h$,
\[
 \left\lfloor\frac{n-hr}{h}\right\rfloor=q-r
\]
and
\[
 \left\lfloor\frac{h(r+1)-1}{h}\right\rfloor=r.
\]
Hence the contribution of this $1$-run is
\[
 \max\left\{
 q+(h-1)r,\,
 n-h+1-(h-1)r
 \right\}.
\]
The first expression is increasing in $r$, while the second is decreasing
in $r$.  Therefore, among all complete $1$-runs, only the first and the
last can matter.

The first complete $1$-run occurs at $r=D$, and its decreasing-end
candidate is
\[
 P=n-h+1-(h-1)D.
\]
If $q>D$, the last complete $1$-run occurs at
\[
 \rho=R_D(q-1),
\]
and its increasing-end candidate is
\[
 Q=q+(h-1)\rho.
\]

By Lemma~\ref{lem:zero-run-dominated}, every internal zero-run is dominated
by one of its neighboring $1$-runs and may be discarded from the global
maximum.

It remains only to consider the final constant interval.

If $D\preceqtwo q$, then the final interval is the truncated $1$-block
\[
 [hq,n],
\]
whose length is $u+1$.  Proposition~\ref{prop:run-bs} gives
\[
 E=
 \max\left\{
 hq,\,
 \left\lfloor\frac{n}{u+1}\right\rfloor
 \right\}.
\]

If $D\npreceqtwo q$, let $r=R_D(q)$.  Then the last $1$-block occurs at
$r$, and the final zero-run begins at
\[
 A=h(r+1)
\]
and ends at $n$.  Its length is
\[
 L=n-A+1.
\]
Again Proposition~\ref{prop:run-bs} gives
\[
 E=
 \max\left\{
 A,\,
 \left\lfloor\frac{n}{L}\right\rfloor
 \right\}.
\]

Thus, when $q=D$, there is no complete $1$-run strictly before the final
block, so only $I$ and $E_{n,d}$ are needed.  When $q>D$, the only additional
possible extrema are $P$ and $Q$.  This proves the theorem.
\end{proof}

\begin{remark}
Every complete $1$-run has length exactly $h$ by
Lemma~\ref{lem:isolated}.  Zero-runs are obtained from gaps between
successive integers $t$ satisfying $D\preceqtwo t$.  Thus
Proposition~\ref{prop:bs-run} reduces the block-sensitivity problem for arbitrary
degree to the binary spacing of the supermasks of $D$.
\end{remark}

\begin{corollary}
If $d$ is odd, then
\[
 \bs(\sigma_{n,d})=n.
\]
This result was previously established in \cite{ZhangLiAdeyeye2021}.
\end{corollary}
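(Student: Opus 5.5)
The quickest route is to sandwich $\bs(\sigma_{n,d})$ between $s(\sigma_{n,d})$ and $n$. For the upper bound, a family of pairwise disjoint nonempty blocks in $[n]$ has at most $n$ members, so $\bs(f)\le n$ for every $f$ on $n$ variables. For the lower bound, the standard inequality $s(f)\le\bs(f)$ from the Preliminaries applies. By Theorem~\ref{thm:odd}, $s(\sigma_{n,d})=n$ whenever $d$ is odd and $1\le d\le n$. Combining these gives
\[
 n=s(\sigma_{n,d})\le\bs(\sigma_{n,d})\le n .
\]
This is exactly the argument the paper already used for the endpoint case $d=n$.

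As a consistency check, I would also recover the result from Theorem~\ref{thm:bs-explicit} with $h=1$ and $D=d$, so that $n=q$ and $u=0$. If $q=D$, then $d=n$ and $I=1$. Moreover $D\preceqtwo q$, so $E_{n,d}=\max\{hq,\lfloor n/1\rfloor\}=n$, and the maximum is $n$. If $q>D$, then $P=n-h+1-(h-1)D=n$, so the maximum is at least $n$. Each candidate is the block sensitivity at some weight, so none can exceed $n$ by the upper bound above, and the maximum equals $n$.

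The direct sandwich argument is essentially immediate. The only point that needs care is that Theorem~\ref{thm:odd} is quoted from \cite{ZhangLiAdeyeye2021} rather than proved here. If a self-contained argument is wanted, I would derive it from Theorem~\ref{thm:main-s}'s ingredients with $h=1$:
\begin{itemize}
\item Every $1$-block is a single isolated weight $t$ with $D\preceqtwo t$, by Lemma~\ref{lem:isolated}.
\item Consequently, at weight $t=D=d$ the value differs from both neighbours, provided $d<n$.
\item Lemma~\ref{lem:local-s} then gives local sensitivity $d+(n-d)=n$.
\item In the case $d=n$, one uses weight $n$ instead, where the local sensitivity is $n$.
\end{itemize}
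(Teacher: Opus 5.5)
Your proof is correct and is the same as the paper's: Theorem~\ref{thm:odd} gives $s(\sigma_{n,d})=n$, and the chain $s(f)\le\bs(f)\le n$ then forces $\bs(\sigma_{n,d})=n$. Your consistency check via Theorem~\ref{thm:bs-explicit} with $h=1$ and your self-contained derivation of $s=n$ from Lemmas~\ref{lem:isolated} and~\ref{lem:local-s} are both valid, but they are optional extras.
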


\begin{proof}
By Theorem~\ref{thm:odd}, $s(\sigma_{n,d})=n$, and for every Boolean
function
\[
 s(f)\le \bs(f)\le n.
\]
\end{proof}

\begin{corollary}[Equality versus strict separation]
\label{cor:equality-separation}
For every $1\le d\le n$, the formulas above decide whether
\[
 s(\sigma_{n,d})=\bs(\sigma_{n,d})
 \quad\text{or}\quad
 s(\sigma_{n,d})<\bs(\sigma_{n,d}).
\]
If $d$ is odd, equality always holds.  If $d$ is even, write
\[
 d=hD,\qquad h=2^{\nu_2(d)},\qquad D\ \mathrm{odd},
 \qquad n=hq+u,\quad 0\le u<h,
\]
and set
\[
 \tau=\min\{q,R_D(q)+1\},\qquad
 S=\max\{n-d+1,h\tau\}.
\]
Then $S=s(\sigma_{n,d})$.  If $q>D$, put
\[
 \rho=R_D(q-1),\qquad
 P=n-h+1-(h-1)D,\qquad Q=q+(h-1)\rho.
\]
Then
\[
 \bs(\sigma_{n,d})>s(\sigma_{n,d})
\]
if and only if at least one of the following explicit inequalities holds:
\[
 E_{n,d}>S\quad\text{or}\quad P>S\quad\text{or}\quad Q>S,
\]
where the last two are present only when $q>D$.
Equivalently, equality holds precisely when every applicable one of these
three inequalities is reversed.  Thus equality versus strict separation is
decided entirely by the binary supermask data $R_D(q)$ and $R_D(q-1)$ and
by elementary arithmetic in $h,q,u,D$.
\end{corollary}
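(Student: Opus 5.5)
The proof will be a direct comparison of the two exact formulas already established, so it needs no new combinatorial input. In the odd case, Theorem~\ref{thm:odd} gives $s(\sigma_{n,d})=n$. The preceding corollary gives $\bs(\sigma_{n,d})=n$. Hence equality always holds.

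For even $d$, I will first identify $S$ with the sensitivity. Theorem~\ref{thm:main-s} gives $s(\sigma_{n,d})=\max\{n-d+1,h\min\{q,R_D(q)+1\}\}$, which is exactly $S$ by definition of $\tau$.

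Next I will invoke Theorem~\ref{thm:bs-explicit}. It gives $\bs(\sigma_{n,d})=\max\{I,E_{n,d}\}$ when $q=D$, and $\max\{I,E_{n,d},P,Q\}$ when $q>D$, with $I=n-d+1$. I also need the edge case $d=n$, where $\bs=n$. Here $q=D$ and $u=0$, so $E_{n,d}\ge hq=n$, and the formula is consistent with this; no separate treatment should be needed beyond noting that Theorem~\ref{thm:bs-explicit} covers it.

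The key observation is that $I=n-d+1\le S$ trivially, since $n-d+1$ is one of the two terms defining $S$. Therefore
\[
\bs(\sigma_{n,d})>S
\]
holds iff the maximum is attained strictly above $S$ by one of the remaining candidates. That is, $\bs(\sigma_{n,d})>S$ iff $E_{n,d}>S$, or (when $q>D$) $P>S$ or $Q>S$. This is the stated criterion.

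The converse direction, that equality holds precisely when all applicable inequalities are reversed, follows from the same observation. Since $s\le\bs$ always, if every candidate is $\le S$ then $\bs\le S=s\le\bs$.

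Finally, I will note what the quantities depend on. $E_{n,d}$ depends only on whether $D\preceqtwo q$, on $R_D(q)$, and on $h,q,u$. $P$ depends on $h,q,u,D$, and $Q$ on $R_D(q-1)$. So the decision uses only the stated binary supermask data.

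The only step needing mild care is this sanity check: the candidates in Theorem~\ref{thm:bs-explicit} are genuinely attained values, not just upper bounds. This ensures that a candidate exceeding $S$ really forces $\bs>S$. That holds because each candidate was computed as an exact endpoint value $\bs(f;w)$ via Lemma~\ref{lem:local-bs}, and the theorem expresses $\bs$ as their maximum. I do not expect any real obstacle beyond this.
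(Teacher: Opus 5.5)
Your proposal is correct and follows essentially the same route as the paper: you identify $S$ with $s(\sigma_{n,d})$ via Theorem~\ref{thm:main-s} and read off $\bs(\sigma_{n,d})$ as the maximum of the candidates in Theorem~\ref{thm:bs-explicit}. You then observe that $I=n-d+1\le S$ cannot create a strict separation, so only $E_{n,d}$, $P$, $Q$ matter. Your extra checks (the case $d=n$, and that the candidates are attained values) are harmless but unnecessary, since Theorem~\ref{thm:bs-explicit} already asserts an exact equality for all $1\le d\le n$.
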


\begin{proof}
For odd $d$, equality follows from Theorem~\ref{thm:odd} and the preceding
corollary.  For even $d$, Theorem~\ref{thm:main-s} gives $S$, while
Theorem~\ref{thm:bs-explicit} gives the block sensitivity.  The initial-run
candidate $I=n-d+1$ is already one of the two candidates defining $S$, and
therefore cannot produce a strict separation.  The only remaining candidates
are exactly $E_{n,d}$ and, when $q>D$, $P$ and $Q$.  Comparing them with $S$
gives the criterion.
\end{proof}

\begin{remark}[Useful binary simplifications]
\label{rem:separation-simplifications}
The preceding criterion can often be checked without evaluating all three
candidates.  First,
\[
 P-(n-d+1)=D-h,
\]
so the first complete $1$-run can beat the initial-run sensitivity candidate
only when $D>h$.  Second, if $D\preceqtwo q$, then $\tau=q$ and the final-run
candidate satisfies $E_{n,d}\le hq\le S$; hence the final run cannot create a
strict separation.  Third, if $D\npreceqtwo q$ and
$\rho=R_D(q)=R_D(q-1)<q$, then $\tau=\rho+1$ and
\[
 Q-h\tau=q-\rho-h.
\]
Thus the last complete $1$-run can beat the last-transition sensitivity
candidate only when the terminal gap $q-\rho$ between $q$ and the last
supermask of $D$ is larger than $h$.  These identities expose directly how
strict separation is governed by the binary spacing of the supermasks of the
odd part $D$.
\end{remark}

The strict inequalities found for particular elementary symmetric Boolean
functions in \cite{ZhangLiAdeyeye2021} are therefore instances of a question
that is now decidable for arbitrary degree.  In particular, that paper already
gives fixed-degree infinite families, for example $d=6$ with
$n\equiv4,5\pmod 8$.  The arbitrary-degree formulas also yield infinite
families in which the degree itself grows.

\begin{corollary}[An infinite growing-degree separation family]
\label{cor:s-bs-growing-family}
For every integer $k\ge2$, put
\[
 d_k=2^{k+1}-2,
 \qquad
 n_k=2^{k+1}+4.
\]
Then
\[
 s(\sigma_{n_k,d_k})=2^{k+1}
 <2^{k+1}+1=\bs(\sigma_{n_k,d_k}).
\]
Consequently, there are infinitely many elementary symmetric Boolean
functions of unbounded degree for which sensitivity is strictly smaller than
block sensitivity.
\end{corollary}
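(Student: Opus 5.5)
The plan is to read both values off Theorem~\ref{thm:main-s} and Theorem~\ref{thm:bs-explicit}, so the work reduces to computing the binary data $D$, $h$, $q$, $u$, $R_D(q)$ and $R_D(q-1)$.

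Since $d_k=2(2^k-1)$, we have $h=2$ and $D=2^k-1$. This $D$ is odd and satisfies $D\ge3$ because $k\ge2$. Dividing $n_k=2^{k+1}+4$ by $h$ gives $q=2^k+2$ and $u=0$.

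Because $D$ has all of its $k$ low bits equal to $1$, the relation $D\preceqtwo t$ holds exactly when $t\equiv 2^k-1\pmod{2^k}$. The supermasks of $D$ are therefore $2^k-1,\,2^{k+1}-1,\ldots$. For $k\ge2$ we have $2^k+2<2^{k+1}-1$, so no supermask other than $D$ itself lies at or below $q$. Hence $R_D(q)=R_D(q-1)=D=2^k-1$, and in particular $D\npreceqtwo q$ and $q>D$.

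For the sensitivity, $\tau=\min\{q,R_D(q)+1\}=2^k$. Theorem~\ref{thm:main-s} then gives
\[
 s=\max\{n_k-d_k+1,\,2\tau\}=\max\{7,\,2^{k+1}\}=2^{k+1},
\]
where the last equality uses $k\ge2$.

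For the block sensitivity, Theorem~\ref{thm:bs-explicit} applies in the case $q>D$, so there are four candidates:
\begin{itemize}
\item $I=7$;
\item $P=n_k-1-D=2^k+4$;
\item $Q=q+\rho=2^{k+1}+1$;
\item $E_{n,d}$, computed in the second case of Definition~\ref{def:final-candidate}. With $r=D$ we get $A=2(r+1)=2^{k+1}$, and the final zero-run is $[2^{k+1},n_k]$, of length $5$. Hence $E_{n,d}=\max\{2^{k+1},\lfloor n_k/5\rfloor\}=2^{k+1}$.
\end{itemize}
Since $2^k\ge4>3$, we have $P=2^k+4\le 2^{k+1}+1=Q$. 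Also $Q>E_{n,d}$ and $Q>I$, so the maximum is $Q=2^{k+1}+1$.

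Comparing the two values gives the strict inequality $s<\bs$. Because $d_k\to\infty$, this yields the infinite family of unbounded degree.

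The only delicate step is justifying $R_D(q-1)=R_D(q)=D$, namely that no supermask of $2^k-1$ lies in $(D,q]$. I will argue this directly from the congruence description of the supermasks together with the inequality $2^k+2<2^{k+1}-1$. The remaining comparisons are elementary and only need $k\ge2$.
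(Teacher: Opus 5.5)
Your proposal is correct and follows essentially the same route as the paper. You compute $h=2$, $D=2^k-1$, $q=2^k+2$, $u=0$ and $R_D(q)=R_D(q-1)=D$, then read off $s=\max\{7,2^{k+1}\}=2^{k+1}$ from Theorem~\ref{thm:main-s}, and take the maximum of the four candidates $I=7$, $P=2^k+4$, $Q=2^{k+1}+1$, $E_{n,d}=2^{k+1}$ from Theorem~\ref{thm:bs-explicit}, which is $Q$. Your explicit checks that no supermask of $D$ lies in $(D,q]$ and that $P\le Q$ (using $2^k\ge3$) fill in details the paper leaves implicit.
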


\begin{proof}
Write
\[
 d_k=2D,
 \qquad D=2^k-1,
\]
so $h=2$.  Since
\[
 n_k=2q,
 \qquad q=2^k+2=D+3,
 \qquad u=0,
\]
and $D=(11\cdots1)_2$ has its lowest $k$ binary digits all equal to $1$,
the supermasks of $D$ are precisely the integers whose lowest $k$ bits are
all $1$.  Because $k\ge2$, the last such integer not exceeding $q$ is $D$.
Thus
\[
 R_D(q)=R_D(q-1)=D,
 \qquad \tau=D+1=2^k.
\]
Theorem~\ref{thm:main-s} gives
\[
 s(\sigma_{n_k,d_k})
 =\max\{n_k-d_k+1,2\tau\}
 =\max\{7,2^{k+1}\}
 =2^{k+1}.
\]
For block sensitivity, Theorem~\ref{thm:bs-explicit} has the candidates
\[
 I=7,
 \qquad
 P=n_k-1-D=2^k+4,
 \qquad
 Q=q+D=2^{k+1}+1.
\]
Since $D\npreceqtwo q$, the final interval begins at
\[
 A=2(D+1)=2^{k+1}
\]
and has length $n_k-A+1=5$.  Hence
\[
 E_{n_k,d_k}
 =\max\left\{2^{k+1},
 \left\lfloor\frac{2^{k+1}+4}{5}\right\rfloor\right\}
 =2^{k+1}.
\]
For $k\ge2$, the largest of the four candidates is therefore $Q$, and
\[
 \bs(\sigma_{n_k,d_k})=2^{k+1}+1.
\]
This proves the strict separation.
\end{proof}

\subsection{A general relation among $s$, $\bs$, and $C$}

We first record a general observation that applies to every nonconstant
symmetric Boolean function, not only to elementary symmetric functions.  It
shows that one of the four formally possible equality/strictness patterns among
$s$, $\bs$, and $C$ can never occur.

\begin{theorem}[A general symmetric-function inequality]
\label{thm:symmetric-bs-c}
Let $f:\{0,1\}^n\to\{0,1\}$ be a nonconstant symmetric Boolean function.
Then
\[
 \bs(f)\le \max\{s(f),C(f)-1\}.
\]
Consequently,
\[
 \bs(f)=C(f)\quad\Longrightarrow\quad s(f)=\bs(f)=C(f).
\]
In particular, the pattern
\[
 s(f)<\bs(f)=C(f)
\]
cannot occur for a nonconstant symmetric Boolean function.
\end{theorem}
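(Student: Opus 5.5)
The plan is to work locally, one Hamming weight at a time. Fix a weight $w$ lying in a maximal constant interval $[A,B]$, and put $L=B-A+1$. Let $L_{\min}$ be the minimum run length. By Lemma~\ref{lem:symmetric-certificate-runs}, $C(f)-1=n-L_{\min}$. It therefore suffices to show that every weight $w$ satisfies
\[
\bs(f;w)\le s(f)\quad\text{or}\quad \bs(f;w)\le n-L_{\min}.
\]
By Proposition~\ref{prop:run-bs} the maximum of $\bs(f;\cdot)$ over $[A,B]$ is attained at $w=A$ or $w=B$, so only endpoints need to be treated. The reflection $x\mapsto\bar x$ sends weight $w$ to $n-w$. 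It preserves symmetry, $s$, $\bs$, $C$ and the multiset of run lengths, and it exchanges left and right endpoints. So I will handle only $w=A$, the left endpoint.

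First the degenerate cases. Since $f$ is nonconstant, $A=0$ and $B=n$ cannot both hold. If $A=0$, Lemma~\ref{lem:local-bs} gives $\bs(f;w)=\lfloor (n-w)/(B-w+1)\rfloor$, which is maximized at $w=B$ with value $n-B$. By Lemma~\ref{lem:local-s} this equals $s(f;x)$ at weight $B$, because the layer $B+1$ has a different value. Hence $\bs(f;w)\le s(f)$. The case $B=n$ is the mirror image.

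Now take $A\ge1$ and $B\le n-1$, and set $w=A$, $m=n-A\ge L$. Lemma~\ref{lem:local-bs} gives
\[
\bs(f;A)=A+\lfloor m/L\rfloor .
\]
If $L=1$, then $A=B$ and both neighbouring layers differ from layer $A$. Lemma~\ref{lem:local-s} then gives $s(f;x)=A+(n-A)=n\ge\bs(f;A)$. Assume $L\ge2$. If $m=L$, then $B=n-1$, so $\{n\}$ is a run of length $1$. In that case $L_{\min}=1$ and $n-L_{\min}=n-1\ge A+1=\bs(f;A)$, using $A=n-L\le n-2$. If $m=L+k$ with $k\ge1$, then $\lfloor m/L\rfloor=1+\lfloor k/L\rfloor\le k$ because $L\ge2$. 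Hence
\[
\bs(f;A)\le A+k=n-L\le n-L_{\min}.
\]
In every case the desired bound holds, so $\bs(f)\le\max\{s(f),C(f)-1\}$.

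The consequences follow at once. If $\bs(f)=C(f)$, then $\bs(f)>C(f)-1$, so the inequality forces $\bs(f)\le s(f)$, giving $s=\bs=C$; this rules out $s<\bs=C$. The main obstacle I expect is the borderline case $m=L$, where the naive bound fails and one must notice that the neighbouring singleton run $\{n\}$ lowers $L_{\min}$. The other step needing care is checking that the reflection genuinely reduces the right-endpoint case to the left-endpoint case.
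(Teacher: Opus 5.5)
Your proof is correct. Its overall structure matches the paper's: a pointwise bound on $\bs(f;w)$ in terms of $s(f)$ and the run lengths, followed by $C(f)-1=n-L_{\min}$ from Lemma~\ref{lem:symmetric-certificate-runs}. You derive the pointwise bound differently, however.

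The paper works at an arbitrary weight $w\in[a,b]$. It rewrites $\bs(f;w)=2+\lfloor (a-1)/p\rfloor+\lfloor (n-b-1)/q\rfloor$ with $p+q=L+1$. It then takes one unit of slack from whichever of $p,q$ is at least $2$, which gives $\bs(f;w)\le n-L$. The boundary cases $a=1$ and $b=n-1$ are disposed of by observing that then $s(f)=n$.

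You instead use the convexity result, Proposition~\ref{prop:run-bs}, to restrict to endpoints. You then use complementation to restrict to left endpoints; this is legitimate when applied to $g(x)=f(\bar x)$. At a left endpoint the downward contribution is exactly $A$. All the slack then comes from the single estimate $\lfloor m/L\rfloor\le m-L$ for $m\ge L+1$, $L\ge 2$. You close the borderline case $B=n-1$ via $L_{\min}=1$ rather than via $s(f)=n$. Both work, since a singleton run forces $s(f)=C(f)=n$.

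Your route makes the arithmetic almost trivial, but it relies on Proposition~\ref{prop:run-bs} and on the reflection reduction. The paper's argument is self-contained and gives the bound at every weight directly, with no endpoint reduction.
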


\begin{proof}
Let $(v_0,v_1,\ldots,v_n)$ be the value vector of $f$, and let $[a,b]$ be a
maximal constant interval containing the Hamming weight $w$.  Put
$L=b-a+1$.

First suppose that $0<a\le b<n$.  Put
\[
 p=w-a+1,\qquad q=b-w+1.
\]
As in the pure-block reduction of \cite[Lemma~9]{MittalNairPatro2024},
a sensitive block that moves the Hamming weight below $a$ has net downward
change at least $p$.  If such a block contains $r$ flips of type
$1\to0$ and $t$ flips of type $0\to1$, then $r-t\ge p$, and hence
$r\ge p$.  Thus every downward sensitive block uses at least $p$ of the
$w$ coordinates that are $1$ in the original input, so a pairwise disjoint
family contains at most $\lfloor w/p\rfloor$ downward blocks.  Similarly,
every upward sensitive block uses at least $q$ of the $n-w$ original
$0$-coordinates, so there are at most $\lfloor(n-w)/q\rfloor$ pairwise
disjoint upward blocks.  Both bounds are attained simultaneously by taking
pure blocks consisting respectively of exactly $p$ original $1$-coordinates
and exactly $q$ original $0$-coordinates.  Therefore
\[
 \bs(f;w)=
 \left\lfloor\frac{w}{p}\right\rfloor+
 \left\lfloor\frac{n-w}{q}\right\rfloor.
\]
Since $p+q=L+1$, we have
\[
 \bs(f;w)=2+
 \left\lfloor\frac{a-1}{p}\right\rfloor+
 \left\lfloor\frac{n-b-1}{q}\right\rfloor.                 \tag{*}
\]
If $L\ge2$, then $p+q\ge3$, so at least one of $p,q$ is at least $2$.
Assume first that $a\ge2$ and $b\le n-2$.  If $p\ge2$, then
\[
 \left\lfloor\frac{a-1}{p}\right\rfloor\le a-2,
\]
whereas
\[
 \left\lfloor\frac{n-b-1}{q}\right\rfloor\le n-b-1.
\]
If instead $q\ge2$, the symmetric estimates give the same total bound.
Thus in either case
\[
 \left\lfloor\frac{a-1}{p}\right\rfloor+
 \left\lfloor\frac{n-b-1}{q}\right\rfloor
 \le (a-1)+(n-b-1)-1.
\]
Substitution in $(*)$ yields
\[
 \bs(f;w)\le
 2+(a-1)+(n-b-1)-1=n-b+a-1=n-L.
\]
If instead $a=1$, maximality gives $v_0\ne v_1$, so the all-zero input has
sensitivity $n$; hence $\bs(f;w)\le n=s(f)$.  The case $b=n-1$ is symmetric.

If $L=1$, the interval is the singleton $[w,w]$.  Maximality implies that
every one-bit change from an input of weight $w$ changes the function value
(the one existing side is sufficient at $w=0$ or $w=n$).  Thus $s(f)=n$,
and again $\bs(f;w)\le s(f)$.

It remains to consider a maximal interval meeting an endpoint.  For
$[0,b]$,
\[
 \bs(f;w)=\left\lfloor\frac{n-w}{b-w+1}\right\rfloor\le n-b.
\]
Since $v_b\ne v_{b+1}$, an input of weight $b$ has at least $n-b$ sensitive
coordinates, and hence $\bs(f;w)\le s(f)$.  The interval $[a,n]$ is
symmetric.  We have therefore proved, for every $w$ in a maximal constant
interval of length $L$,
\[
 \bs(f;w)\le\max\{s(f),n-L\}.                              \tag{**}
\]

Let $L_{\min}$ be the minimum length of a maximal constant interval of the
value vector.  By Lemma~\ref{lem:symmetric-certificate-runs},
\[
 C(f)=n-L_{\min}+1.
\]
Since $L\ge L_{\min}$, we have $n-L\le C(f)-1$.  Taking the maximum in
$(**)$ over all Hamming weights yields
\[
 \bs(f)\le\max\{s(f),C(f)-1\}.
\]
Finally, if $\bs(f)=C(f)$, then
\[
 C(f)\le\max\{s(f),C(f)-1\},
\]
so $s(f)\ge C(f)$.  Together with the standard inequalities
$s(f)\le\bs(f)\le C(f)$, this gives $s(f)=\bs(f)=C(f)$.
\end{proof}

\begin{corollary}[Only three patterns are possible]
\label{cor:three-patterns-symmetric}
For every nonconstant symmetric Boolean function, exactly one of the
following three mutually exclusive patterns occurs:
\[
 s=\bs=C,\qquad s=\bs<C,\qquad s<\bs<C.
\]
The fourth formally possible equality/strictness pattern $s<\bs=C$ is impossible.
\end{corollary}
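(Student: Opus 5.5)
The corollary follows directly from the standard chain $s(f)\le\bs(f)\le C(f)$ combined with Theorem~\ref{thm:symmetric-bs-c}. The plan is to list the formally possible patterns, show that exactly three survive, and then note that they are mutually exclusive.

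First, each of the two inequalities in the chain $s(f)\le\bs(f)\le C(f)$ is either an equality or strict. This gives exactly four formal patterns:
\[
 s=\bs=C,\qquad s=\bs<C,\qquad s<\bs<C,\qquad s<\bs=C.
\]
For any pair of integers satisfying $\le$, exactly one of $=$ or $<$ holds. Hence for a given $f$ exactly one of the four patterns is realized, and in particular they are mutually exclusive.

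Second, we rule out the fourth pattern. Suppose $f$ is nonconstant symmetric with $\bs(f)=C(f)$. The second conclusion of Theorem~\ref{thm:symmetric-bs-c} then forces $s(f)=\bs(f)=C(f)$, which contradicts $s<\bs$. Equivalently, one can argue from the inequality $\bs(f)\le\max\{s(f),C(f)-1\}$: if $s(f)<\bs(f)$, the maximum must be $C(f)-1$, so $\bs(f)\le C(f)-1<C(f)$. Either way, whenever $s<\bs$ we get $\bs<C$. Hence for nonconstant symmetric $f$ exactly one of the remaining three patterns occurs.

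Nonconstancy is used only to invoke Theorem~\ref{thm:symmetric-bs-c}. For a constant function all three measures are $0$, so the first pattern holds there too, but the statement does not require this. No step presents a real obstacle; the only point needing care is to observe that exclusivity comes from the trichotomy of integer comparison, not from the theorem.
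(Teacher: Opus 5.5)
Your proposal is correct and matches how the paper gets this corollary: it is stated without separate proof as an immediate consequence of Theorem~\ref{thm:symmetric-bs-c}, whose conclusion $\bs(f)=C(f)\Rightarrow s(f)=\bs(f)=C(f)$ rules out $s<\bs=C$, while the chain $s\le\bs\le C$ leaves only the four formal patterns. You are also right that mutual exclusivity comes from the trichotomy of integer comparison and not from the theorem.
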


\subsection{Application to elementary symmetric Boolean functions}

The preceding theorem makes the comparison between block sensitivity and
certificate complexity immediate once the equality case $s=C$ is known.

\begin{corollary}[Block sensitivity versus certificate complexity]
\label{cor:bs-certificate}
Let $1\le d\le n$.  If $d$ is odd, then
\[
 s(\sigma_{n,d})=\bs(\sigma_{n,d})=C(\sigma_{n,d})=n.
\]
Suppose that $d$ is even, and write
\[
 d=hD,\qquad h=2^{\nu_2(d)},\qquad D\text{ odd},
 \qquad n=hq+u,\quad 0\le u<h.
\]
Then
\[
 \bs(\sigma_{n,d})=C(\sigma_{n,d})
\]
if and only if
\[
 D\preceqtwo q\qquad\text{or}\qquad D\preceqtwo(q-1).
\]
In this case all three measures are equal.  If neither binary containment
holds, then
\[
 \bs(\sigma_{n,d})<C(\sigma_{n,d})=n-h+1.
\]
Thus, for even degree, equality at the upper end occurs exactly when the
boundary at Hamming weight $hq$ is a transition of the value sequence.
\end{corollary}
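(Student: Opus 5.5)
Odd $d$ is immediate: Theorem~\ref{thm:odd} gives $s(\sigma_{n,d})=n$, and $s\le\bs\le C\le n$ forces all three to equal $n$. For even $d$, I will use Theorem~\ref{thm:symmetric-bs-c}. Since $\bs=C$ implies $s=\bs=C$, and $s=C$ conversely forces $\bs=C$, the claim reduces to deciding when $s(\sigma_{n,d})=C(\sigma_{n,d})$. Both quantities are explicit. The sensitivity is $\max\{n-d+1,h\tau\}$ by Theorem~\ref{thm:main-s}. The certificate complexity is $n-L_{\min}+1$ by Lemma~\ref{lem:symmetric-certificate-runs}. I will not cite \cite{ZhangLi2026}; instead I will compute $L_{\min}$ directly from the block structure.

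The first step is to determine $L_{\min}$. The candidate runs are as follows.
\begin{itemize}
\item The initial zero-run $[0,d-1]$, of length $d=hD\ge h$.
\item Each complete $1$-run, of length exactly $h$ by Lemma~\ref{lem:isolated}.
\item Each internal zero-run, of length $hg\ge h$.
\item The final run. It has length $u+1\le h$ when $D\preceqtwo q$. When $D\npreceqtwo q$ it is a zero-run $[h(R_D(q)+1),n]$ of length $h(q-R_D(q)-1)+u+1$.
\end{itemize}
The final zero-run has length $\ge u+1$, and it has length $<h$ exactly when $R_D(q)=q-1$. I must also check that a complete $1$-run exists or is irrelevant. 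If $D\npreceqtwo q$, then the run at $R_D(q)\le q-1$ is complete. Combining these cases, $L_{\min}=u+1$ when $D\preceqtwo q$ or $D\preceqtwo(q-1)$, and $L_{\min}=h$ otherwise. So $C=n-u=hq$ in the first case and $C=n-h+1$ in the second.

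The second step is the equality case. When $D\preceqtwo q$ or $D\preceqtwo(q-1)$, Corollary~\ref{cor:end-transition} gives $s=\max\{n-d+1,hq\}$. Since $d\ge h>u$, we have $n-d+1\le n-u=hq$, so $s=hq=C$. Hence all three measures coincide.

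The third step is the strict case, where neither containment holds; this is where the real work lies. Here $\tau=R_D(q)+1\le q-1$, so $h\tau\le hq-h\le n-h<C$. Also $n-d+1\le n-h+1$, with equality only if $D=1$. But $D=1$ is impossible here, since one of $q$ and $q-1$ is odd and therefore contains $1$. So $s<C$, and Theorem~\ref{thm:symmetric-bs-c} then yields $\bs\le\max\{s,C-1\}<C$. This gives $\bs<C=n-h+1$.

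The main obstacle I anticipate is the boundary bookkeeping in computing $L_{\min}$. In particular, I need to handle the case $q=D$ correctly: there, $D\preceqtwo q$, so it falls into the first case even though no complete $1$-run exists. I also need to verify that the truncated final block is counted as a genuine maximal interval. The final sentence of the statement then follows from \eqref{eq:transition-equivalent}.
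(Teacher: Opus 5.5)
Your proof is correct. Its overall structure matches the paper's proof:
\begin{itemize}
\item you reduce the question to comparing $s$ with $C$ via Theorem~\ref{thm:symmetric-bs-c};
\item in the containment case you use Corollary~\ref{cor:end-transition} together with $d\ge h>u$;
\item otherwise you show $h\tau\le h(q-1)<n-h+1$ and $n-d+1<n-h+1$, the latter using $D>1$.
\end{itemize}

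The one real difference is where $C(\sigma_{n,d})$ comes from. The paper imports it from \cite[Theorem~3.1]{ZhangLi2026}, and for odd $d$ from \cite[Corollary~3.2]{ZhangLi2026}. You instead derive it from Lemma~\ref{lem:symmetric-certificate-runs} by computing $L_{\min}$ from the binary block structure. For odd $d$ you simply use the trivial bound $C\le n$.

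Your route makes the corollary depend only on results proved inside this paper. The cost is the run-length bookkeeping, and you handle it correctly. In the containment cases the final run has length $u+1\le h$, while every other run has length at least $h$. Otherwise a complete $1$-run of length $h$ exists at $R_D(q)\le q-2$, and the final zero-run has length at least $h+u+1>h$.

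One small slip: you say the final zero-run has length $<h$ exactly when $R_D(q)=q-1$. In that case its length is $u+1$, which equals $h$ when $u=h-1$, so the claim should read $\le h$. This does not affect your value of $L_{\min}$.
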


\begin{proof}
For odd $d$, Theorem~\ref{thm:odd} gives $s(\sigma_{n,d})=n$, while
\cite[Corollary~3.2]{ZhangLi2026} gives $C(\sigma_{n,d})=n$.  The standard
inequalities $s\le\bs\le C$ therefore give $s=\bs=C=n$.

Now let $d$ be even.  By \cite[Theorem~3.1]{ZhangLi2026},
\[
 C(\sigma_{n,d})=
 \begin{cases}
 n-u=hq,&D\preceqtwo q\text{ or }D\preceqtwo(q-1),\\
 n-h+1,&\text{otherwise}.
 \end{cases}
\]
If $D\preceqtwo q$ or $D\preceqtwo(q-1)$, then the boundary at $hq$ is a
transition.  Corollary~\ref{cor:end-transition} gives
\[
 s(\sigma_{n,d})=\max\{n-d+1,hq\}=hq=C(\sigma_{n,d}),
\]
because $hq-(n-d+1)=d-u-1\ge0$.  Hence $s\le\bs\le C$ yields
$s=\bs=C$.

Conversely, suppose that neither containment holds.  Then the same
certificate-complexity formula gives $C(\sigma_{n,d})=n-h+1$.  Moreover, failure of both containments implies $D>1$ and
$R_D(q)\le q-2$.  Hence
\[
 h\min\{q,R_D(q)+1\}\le h(q-1)<n-h+1=C(\sigma_{n,d}),
\]
while $d=hD>h$ gives
\[
 n-d+1<n-h+1=C(\sigma_{n,d}).
\]
The sensitivity formula therefore yields
$s(\sigma_{n,d})<C(\sigma_{n,d})$.  If one had
$\bs(\sigma_{n,d})=C(\sigma_{n,d})$, Theorem~\ref{thm:symmetric-bs-c} would
force $s(\sigma_{n,d})=C(\sigma_{n,d})$, a contradiction.  Therefore
$\bs(\sigma_{n,d})<C(\sigma_{n,d})$.
\end{proof}

\begin{remark}
In particular, every power-of-two degree satisfies
$\bs(\sigma_{n,2^k})=C(\sigma_{n,2^k})$, since $D=1$ and exactly one of
$q$ and $q-1$ is odd.  For even degrees with $D>1$, strict inequality occurs
precisely when the last two block indices $q-1$ and $q$ are both
non-supermasks of $D$.
\end{remark}

\begin{theorem}[Complete classification of the relations among $s$, $\bs$, and $C$]
\label{thm:three-pattern-classification}
Let $1\le d\le n$.  Exactly three equality/strictness patterns occur among
$s(\sigma_{n,d})$, $\bs(\sigma_{n,d})$, and $C(\sigma_{n,d})$, and they are classified as follows.

If $d$ is odd, then
\[
 s(\sigma_{n,d})=\bs(\sigma_{n,d})=C(\sigma_{n,d})=n.
\]
Now suppose that $d$ is even and write
\[
 d=hD,\qquad h=2^{\nu_2(d)},\qquad D\text{ odd},
 \qquad n=hq+u,\quad 0\le u<h.
\]
Put
\[
 \tau=\min\{q,R_D(q)+1\},\qquad
 S=\max\{n-d+1,h\tau\}=s(\sigma_{n,d}),
\]
and, when $q>D$, put
\[
 \rho=R_D(q-1),\qquad
 P=n-h+1-(h-1)D,\qquad Q=q+(h-1)\rho.
\]
Then:
\begin{enumerate}
\item[(I)] $s=\bs=C$ if and only if
\[
 D\preceqtwo q\quad\text{or}\quad D\preceqtwo(q-1).
\]
\item[(II)] $s=\bs<C$ if and only if
\[
 D\npreceqtwo q,\qquad D\npreceqtwo(q-1),
\]
and every applicable candidate satisfies
\[
 E_{n,d}\le S,\qquad P\le S,\qquad Q\le S,
\]
where $P,Q$ are present only when $q>D$.
\item[(III)] $s<\bs<C$ if and only if
\[
 D\npreceqtwo q,\qquad D\npreceqtwo(q-1),
\]
and at least one applicable candidate exceeds $S$; that is,
\[
 E_{n,d}>S\quad\text{or}\quad P>S\quad\text{or}\quad Q>S,
\]
where $P,Q$ are included only when $q>D$.
Equivalently, the more explicit arithmetic criterion of
Theorem~\ref{thm:strict-hierarchy} applies.
\end{enumerate}
The fourth formally possible equality/strictness pattern, $s<\bs=C$, does not occur.
Thus the three cases above are mutually exclusive and exhaustive.
\end{theorem}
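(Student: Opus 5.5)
The proof will mostly assemble results already established. For odd $d$, the corollary on block sensitivity versus certificate complexity already gives $s=\bs=C=n$, which is pattern (I).

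For even $d$, we first apply Corollary~\ref{cor:three-patterns-symmetric}. It says that $\sigma_{n,d}$, which is nonconstant since $d\le n$, falls into exactly one of the three patterns, and that $s<\bs=C$ never occurs. So it suffices to identify, for each pattern, the condition that selects it.

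\emph{Case 1: $D\preceqtwo q$ or $D\preceqtwo(q-1)$.} Corollary~\ref{cor:bs-certificate} gives $s=\bs=C$ directly.

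\emph{Case 2: neither containment holds.} Corollary~\ref{cor:bs-certificate} gives $\bs<C=n-h+1$. Hence the pattern is (II) or (III), and the only remaining question is whether $s=\bs$ or $s<\bs$. We settle this with Corollary~\ref{cor:equality-separation}, using Theorem~\ref{thm:main-s} (which gives $S=s(\sigma_{n,d})$) and Theorem~\ref{thm:bs-explicit} (which gives $\bs=\max\{I,E_{n,d},P,Q\}$, with $P,Q$ only when $q>D$).

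Since $I=n-d+1\le S$ by the definition of $S$, we get $\bs=\max\{S,E_{n,d},P,Q\}$. Strictly, the max is over $\{I,E_{n,d},P,Q\}$, but $\bs\ge s=S$ lets us insert $S$ without changing the value. Therefore $\bs>S$ exactly when some applicable candidate among $E_{n,d},P,Q$ exceeds $S$, and $\bs=S$ exactly when all of them are at most $S$. This yields the stated conditions for (II) and (III).

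One detail needs checking: $q\ge D$ always holds, since $n\ge d=hD$ forces $q\ge D$. So the case split $q=D$ versus $q>D$ in Theorem~\ref{thm:bs-explicit} covers everything, and "applicable" means exactly what that theorem uses.

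Exhaustiveness and mutual exclusivity follow because the conditions partition the parameter space. Either a containment holds, giving (I); or neither holds, and then some candidate either exceeds $S$, giving (III), or none does, giving (II). Under the conditions of (II) and (III) we have $\bs<C$, so neither can coincide with (I).

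The main obstacle is bookkeeping rather than new mathematics. We must make sure the candidate list in Theorem~\ref{thm:bs-explicit} is complete in the degenerate situations, namely when $q=D$ and when the final block is a truncated zero-run. The equivalence also relies on $I$ never exceeding $S$. Neither point needs anything beyond the cited results.

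The reference to Theorem~\ref{thm:strict-hierarchy} in (III) is only a restatement. We will not rely on it here, and will leave its arithmetic simplification to that theorem.
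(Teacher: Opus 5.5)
Your proposal is correct and follows essentially the paper's route: Corollary~\ref{cor:bs-certificate} settles (I) and gives $\bs<C$ otherwise, and the comparison of the candidates $E_{n,d},P,Q$ with $S$ from Corollary~\ref{cor:equality-separation} (via Theorems~\ref{thm:main-s} and~\ref{thm:bs-explicit}) separates (II) from (III), while Theorem~\ref{thm:symmetric-bs-c} rules out $s<\bs=C$. The differences are cosmetic: you invoke Corollary~\ref{cor:three-patterns-symmetric} at the start rather than at the end, and you re-derive the observation $I\le S$ inline instead of citing Corollary~\ref{cor:equality-separation}.
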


\begin{proof}
For odd $d$, the first assertion follows from
Corollary~\ref{cor:bs-certificate}.  Suppose that $d$ is even.

If $D\preceqtwo q$ or $D\preceqtwo(q-1)$, the proof of
Corollary~\ref{cor:bs-certificate} gives $s=C$; hence the standard chain
$s\le\bs\le C$ gives $s=\bs=C$.  Conversely, if $s=\bs=C$, then in
particular $\bs=C$, so Corollary~\ref{cor:bs-certificate} gives one of the
two binary containments.  This proves (I).

Now assume that neither containment holds.  Corollary~\ref{cor:bs-certificate}
gives $\bs<C$.  Thus only the two patterns $s=\bs<C$ and $s<\bs<C$ remain.
By Corollary~\ref{cor:equality-separation}, the first occurs exactly when
every applicable candidate $E_{n,d},P,Q$ is at most $S$, and the second
occurs exactly when at least one applicable candidate exceeds $S$.  This
proves (II) and (III).  Finally, Theorem~\ref{thm:symmetric-bs-c} excludes
the remaining formal pattern $s<\bs=C$.  Hence (I)--(III) are mutually
exclusive and exhaustive.
\end{proof}

\begin{corollary}[An infinite family with strict separation]
\label{cor:bs-c-strict-family}
For every integer $m\ge 1$,
\[
 \bs(\sigma_{8m+2,6})=8m<8m+1=C(\sigma_{8m+2,6}).
\]
Consequently, there are infinitely many elementary symmetric Boolean
functions for which block sensitivity is strictly smaller than certificate
complexity.
\end{corollary}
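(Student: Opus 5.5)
The plan is to specialize the explicit formulas: $C$ from the certificate-complexity formula \cite[Theorem~3.1]{ZhangLi2026} (quoted in the proof of Corollary~\ref{cor:bs-certificate}), and $\bs$ from Theorem~\ref{thm:bs-explicit}. All quantities are computed in the binary-block notation. Here $d=6=2\cdot 3$, so $h=2$ and $D=3=(11)_2$. For $n=8m+2$ we get $n=hq+u$ with $q=4m+1$ and $u=0$. The supermasks of $3$ are exactly the integers $t\equiv 3\pmod 4$.

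First I will handle certificate complexity. Since $q=4m+1\equiv 1$ and $q-1=4m\equiv 0 \pmod 4$, neither $3\preceqtwo q$ nor $3\preceqtwo(q-1)$ holds. Corollary~\ref{cor:bs-certificate} then gives $C(\sigma_{8m+2,6})=n-h+1=8m+1$. It also gives $\bs<C$, which is a consistency check rather than something we need.

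Next I will compute the four candidates of Theorem~\ref{thm:bs-explicit}. We have $R_3(q)=4m-1$, since this is the largest integer $\le 4m+1$ that is $\equiv 3\pmod 4$; it is at least $3$ because $m\ge1$. Likewise $\rho=R_3(q-1)=4m-1$. Since $q=4m+1>3=D$, all four candidates are present:
\[
 I=n-d+1=8m-3,\qquad P=n-h+1-(h-1)D=8m-2,\qquad Q=q+(h-1)\rho=8m.
\]
For $E_{n,d}$, Definition~\ref{def:final-candidate} applies in its second case because $3\npreceqtwo q$. The final zero-run starts at $A=h(R_3(q)+1)=8m$ and has length $L=n-A+1=3$. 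Hence $E_{n,d}=\max\{8m,\lfloor(8m+2)/3\rfloor\}=8m$, using that $\lfloor(8m+2)/3\rfloor\le 8m$ for $m\ge1$.

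Taking the maximum of the four candidates gives $\bs(\sigma_{8m+2,6})=8m$, which is strictly less than $C=8m+1$. As a sanity check, Theorem~\ref{thm:main-s} gives $\tau=\min\{q,R_3(q)+1\}=4m$, so $s=\max\{8m-3,8m\}=8m$. This matches pattern (II) of Theorem~\ref{thm:three-pattern-classification}. No step is a real obstacle; the only care needed is evaluating $R_3$ correctly and confirming that $q>D$, so that $P$ and $Q$ are indeed included.
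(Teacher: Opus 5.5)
Your proposal is correct and follows the paper's proof essentially step for step. You get $C=n-h+1=8m+1$ from Corollary~\ref{cor:bs-certificate} via the failure of $3\preceqtwo q$ and $3\preceqtwo(q-1)$, then evaluate the candidates $I,P,Q,E_{n,d}$ of Theorem~\ref{thm:bs-explicit} with $R_3(q)=R_3(q-1)=4m-1$ to obtain $\bs=8m$; your explicit checks that $q>D$ and that the final zero-run has length $3$, and your sanity computation $s=8m$, are small details the paper leaves implicit.
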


\begin{proof}
Take $d=6=2\cdot 3$, so $h=2$ and $D=3=(11)_2$, and let
$n=8m+2$.  Then
\[
 n=2(4m+1),
 \qquad q=4m+1,
 \qquad u=0.
\]
Since $q\equiv1\pmod 4$ and $q-1\equiv0\pmod4$, neither $q$ nor
$q-1$ has its two least significant binary digits both equal to $1$.
Therefore
\[
 3\npreceqtwo q,
 \qquad
 3\npreceqtwo(q-1).
\]
Corollary~\ref{cor:bs-certificate} already implies strict inequality and gives
\[
 C(\sigma_{8m+2,6})=n-h+1=8m+1.
\]
To obtain the exact block sensitivity, note that the largest supermask of
$3$ not exceeding $q$ (and also not exceeding $q-1$) is
\[
 R_3(q)=R_3(q-1)=4m-1.
\]
The four candidates in Theorem~\ref{thm:bs-explicit} are therefore
\[
 I=8m-3,
 \qquad
 P=8m-2,
 \qquad
 Q=(4m+1)+(4m-1)=8m,
\]
and, for the final zero-run,
\[
 A=2\bigl(R_3(q)+1\bigr)=8m,
 \qquad
 E_{n,6}=\max\left\{8m,
 \left\lfloor\frac{8m+2}{3}\right\rfloor\right\}=8m.
\]
Hence
\[
 \bs(\sigma_{8m+2,6})=8m<8m+1=C(\sigma_{8m+2,6}),
\]
as claimed.
\end{proof}

\begin{theorem}[Complete strict hierarchy]\label{thm:strict-hierarchy}
Let $1\le d\le n$.  Then
\[
 s(\sigma_{n,d})<\bs(\sigma_{n,d})<C(\sigma_{n,d})
\]
holds if and only if $d$ is even and, on writing
\[
 d=hD,\qquad h=2^{\nu_2(d)},\qquad D\text{ odd},
 \qquad n=hq+u,\quad 0\le u<h,
\]
the following conditions hold.  First,
\begin{equation}\label{eq:strict-hierarchy-terminal}
 D\npreceqtwo q\qquad\text{and}\qquad D\npreceqtwo(q-1).
\end{equation}
Put
\[
 \rho=R_D(q-1)=R_D(q),\qquad g=q-\rho.
\]
Then at least one of the following two pairs of inequalities holds:
\begin{align}
 D&>h,
 & hg+u-2h+1&>(h-1)D, \label{eq:strict-hierarchy-P}\\
 g&>h,
 & hD-u-1&>(h-1)g. \label{eq:strict-hierarchy-Q}
\end{align}
Thus these binary and arithmetic conditions characterize all elementary
symmetric Boolean functions for which the three standard measures are
simultaneously strictly separated.
\end{theorem}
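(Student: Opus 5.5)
The plan is to reduce the statement to the classification in Theorem~\ref{thm:three-pattern-classification}(III), and then translate the candidate inequalities $P>S$ and $Q>S$ into the explicit arithmetic conditions \eqref{eq:strict-hierarchy-P} and \eqref{eq:strict-hierarchy-Q}.

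First, the odd case. If $d$ is odd, Theorem~\ref{thm:three-pattern-classification} gives $s=\bs=C$, so $d$ must be even. For even $d$, part (III) of that theorem says the strict hierarchy holds if and only if the terminal condition \eqref{eq:strict-hierarchy-terminal} holds and at least one applicable candidate among $E_{n,d},P,Q$ exceeds $S$.

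Assume \eqref{eq:strict-hierarchy-terminal} and record the resulting simplifications. Since $d\le n$, we have $hD\le hq+u$ with $u<h$, hence $D\le q$. Because $q\ne D$ and $q-1\ne D$ (each is a non-supermask), in fact $q\ge D+2>D$. So $P$ and $Q$ are both present, and $\rho=R_D(q)=R_D(q-1)\le q-2$. Consequently
\[
 \tau=\rho+1,\qquad S=\max\{n-d+1,\ h(\rho+1)\},\qquad g=q-\rho\ge 2.
\]

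The step I expect to need a small argument is showing that the final-run candidate never produces separation, i.e.\ $E_{n,d}\le S$. Here $D\npreceqtwo q$, so $E_{n,d}=\max\{A,\lfloor n/L\rfloor\}$ with $A=h(\rho+1)=h\tau\le S$ and $L=n-A+1$. Since $n=A+L-1$ and $A,L\ge1$, the inequality $(A-1)(L-1)\ge0$ gives
\[
 \frac{n}{L}=\frac{A+L-1}{L}\le A.
\]
Hence $E_{n,d}=A\le S$. So under \eqref{eq:strict-hierarchy-terminal}, strict separation is equivalent to $P>S$ or $Q>S$.

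Next, unpack $P>S$, which means $P>n-d+1$ and $P>h(\rho+1)$. By Remark~\ref{rem:separation-simplifications}, $P-(n-d+1)=D-h$, so the first inequality is $D>h$. Substituting $n=hq+u$ into $n-h+1-(h-1)D>h\rho+h$ and using $h(q-\rho)=hg$ turns the second into $hg+u-2h+1>(h-1)D$. This is exactly \eqref{eq:strict-hierarchy-P}.

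Similarly, unpack $Q>S$. The inequality $Q>h\tau$ reads $Q-h\tau=q-\rho-h=g-h>0$, again by the remark. The inequality $q+(h-1)\rho>hq+u-hD+1$ rearranges to $hD-u-1>(h-1)(q-\rho)=(h-1)g$. These two together are exactly \eqref{eq:strict-hierarchy-Q}.

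Combining the two unpacked conditions with the reduction to $P>S$ or $Q>S$ yields the theorem. The only non-bookkeeping step is the bound $E_{n,d}\le S$; everything else is direct substitution into the formulas of Theorem~\ref{thm:main-s} and Theorem~\ref{thm:bs-explicit}.
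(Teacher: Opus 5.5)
Your proposal is correct and follows essentially the same route as the paper: you obtain $\bs<C$ from the terminal condition, show that the final-run candidate satisfies $E_{n,d}=h(\rho+1)\le S$, and unpack $P>S$ and $Q>S$ into \eqref{eq:strict-hierarchy-P} and \eqref{eq:strict-hierarchy-Q}. Your bound $(A-1)(L-1)\ge0$ is the same estimate as the paper's $\lfloor n/L\rfloor=1+\lfloor (A-1)/L\rfloor\le A$. Going through part (III) of Theorem~\ref{thm:three-pattern-classification} instead of directly through Corollary~\ref{cor:bs-certificate} and Theorem~\ref{thm:bs-explicit} is harmless, since that part is proved from those results and not from the present theorem. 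Your explicit check that $q\ge D+2$, so that $P$ and $Q$ are present, fills in a detail the paper leaves implicit.
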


\begin{proof}
If $d$ is odd, then $s=\bs=C=n$, so strict separation is impossible.
Let $d$ be even.  By Corollary~\ref{cor:bs-certificate}, the inequality
$\bs<C$ is equivalent to \eqref{eq:strict-hierarchy-terminal}.  Under this
condition neither $q$ nor $q-1$ is a supermask of $D$, and hence
\[
 \rho=R_D(q-1)=R_D(q),\qquad g=q-\rho\ge2.
\]
Theorem~\ref{thm:main-s} gives
\[
 S:=s(\sigma_{n,d})=\max\{I,H\},
 \qquad I=n-d+1,\qquad H=h(\rho+1).
\]
The final zero-run begins at $A=h(\rho+1)=H$ and has length
$L=n-A+1$.  Its second endpoint candidate satisfies
\[
 \left\lfloor\frac{n}{L}\right\rfloor
 =1+\left\lfloor\frac{A-1}{L}\right\rfloor
 \le A=H.
\]
Therefore $E_{n,d}=H\le S$.  Thus, by
Theorem~\ref{thm:bs-explicit}, $\bs>S$ can occur only through the
first or last complete $1$-run candidates
\[
 P=n-h+1-(h-1)D,\qquad Q=q+(h-1)\rho.
\]

Now $P>S$ is equivalent to the simultaneous inequalities $P>I$ and $P>H$.
Straight calculation gives
\[
 P-I=D-h
\]
and
\[
 P-H=hg+u-2h+1-(h-1)D.
\]
Hence $P>S$ is exactly \eqref{eq:strict-hierarchy-P}.
Similarly, $Q>S$ is equivalent to $Q>I$ and $Q>H$, while
\[
 Q-H=g-h
\]
and
\[
 Q-I=hD-u-1-(h-1)g.
\]
Hence $Q>S$ is exactly \eqref{eq:strict-hierarchy-Q}.  Combining these
observations with the characterization of $\bs<C$ proves the theorem.
\end{proof}

\begin{corollary}[An infinite full-separation family]\label{cor:full-separation-family}
For every integer $m\ge1$,
\[
 \boxed{
 s(\sigma_{8m+4,6})=8m
 <8m+1=\bs(\sigma_{8m+4,6})
 <8m+3=C(\sigma_{8m+4,6}).}
\]
Consequently the strict hierarchy $s<\bs<C$ occurs for infinitely many
elementary symmetric Boolean functions.
\end{corollary}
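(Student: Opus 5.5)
We specialize the earlier general results to $d=6$, so $h=2$ and $D=3=(11)_2$, and take $n=8m+4$. Then $n=2(4m+2)$, which gives $q=4m+2$ and $u=0$. Since $q\equiv 2\pmod 4$ and $q-1\equiv 1\pmod 4$, neither $q$ nor $q-1$ has both of its two lowest binary digits equal to $1$. Hence $3\npreceqtwo q$ and $3\npreceqtwo(q-1)$. Corollary~\ref{cor:bs-certificate} then gives directly
\[
 \bs(\sigma_{n,6})<C(\sigma_{n,6})=n-h+1=8m+3 .
\]

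Next we compute the sensitivity from Theorem~\ref{thm:main-s}. The supermasks of $3$ are the integers congruent to $3$ modulo $4$, so
\[
 \rho=R_3(q)=R_3(q-1)=4m-1,\qquad \tau=\min\{q,\rho+1\}=4m .
\]
This yields $s=\max\{n-5,\,2\cdot 4m\}=\max\{8m-1,8m\}=8m$.

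For the block sensitivity we evaluate the four candidates of Theorem~\ref{thm:bs-explicit}. Since $q=4m+2>3=D$, both $P$ and $Q$ are present:
\[
 I=n-5=8m-1,\qquad P=n-1-3=8m,\qquad Q=q+\rho=8m+1 .
\]
For the final zero-run we have $A=2(\rho+1)=8m$ and length $L=n-A+1=5$. This gives $E_{n,6}=\max\{8m,\lfloor(8m+4)/5\rfloor\}=8m$. The maximum of the four candidates is therefore $Q=8m+1$, so $\bs=8m+1$.

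As a consistency check, Theorem~\ref{thm:strict-hierarchy} should confirm the strict separation. Here $g=q-\rho=3>h=2$, and $hD-u-1=5>(h-1)g=3$, so condition \eqref{eq:strict-hierarchy-Q} holds.

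The main obstacle is only bookkeeping. We must correctly identify $R_3(q)$ and $R_3(q-1)$, and we must compare the final-run candidate with $Q$ correctly. The computation needs no case split in $m$: $\lfloor(8m+4)/5\rfloor\le 8m$ holds for all $m\ge1$, and $m\ge1$ ensures $q>D$, which is needed for $P$ and $Q$ to be present.
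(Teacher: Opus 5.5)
Your proof is correct and matches the paper's argument step for step. You use the same data $q=4m+2$, $u=0$, $R_3(q)=R_3(q-1)=4m-1$, the same four candidates $I,P,Q,E_{n,6}$ from Theorem~\ref{thm:bs-explicit}, and the same value $C=n-h+1=8m+3$; the only difference is that you obtain $C$ through Corollary~\ref{cor:bs-certificate}, which rests on the certificate-complexity formula the paper cites directly, and your additional check against Theorem~\ref{thm:strict-hierarchy} is also accurate.
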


\begin{proof}
Let $d=6=2\cdot3$, so $h=2$ and $D=3$, and put $n=8m+4$.
Then
\[
 q=4m+2,\qquad u=0,\qquad
 R_3(q)=R_3(q-1)=4m-1,
\]
since the supermasks of $3=(11)_2$ are precisely the integers congruent to
$3$ modulo $4$.  Hence $\tau=4m$ and Theorem~\ref{thm:main-s} gives
\[
 s(\sigma_{8m+4,6})
 =\max\{8m-1,8m\}=8m.
\]
The block-sensitivity candidates are
\[
 I=8m-1,\qquad P=8m,\qquad Q=8m+1.
\]
The final zero-run starts at $A=8m$ and has length $5$, so
\[
 E_{8m+4,6}
 =\max\left\{8m,\left\lfloor\frac{8m+4}{5}\right\rfloor\right\}=8m.
\]
Therefore Theorem~\ref{thm:bs-explicit} gives
$\bs(\sigma_{8m+4,6})=8m+1$.  Finally, neither $q$ nor $q-1$ is a
supermask of $3$, so \cite[Theorem~3.1]{ZhangLi2026} gives
\[
 C(\sigma_{8m+4,6})=n-h+1=8m+3.
\]
This proves the asserted strict hierarchy.
\end{proof}

\section{Relation with earlier formulas}

The binary-block viewpoint contains several earlier results as immediate
special cases.

For $d=2^k$, we have $D=1$ and $h=2^k$.  The block values alternate
between a zero-block and a one-block, so
Corollary~\ref{cor:power2} gives the previously known sensitivity formula.

For $d=6$, the example above gives an explicit
residue-class calculation showing that the old formula is recovered exactly.
For $d=10$ and $d=12$, the earlier calculations were organized by residue
classes modulo $16$.  In the present framework these residue classes arise
automatically from the binary containment condition on $D$ together with
the block length $h$, so the earlier formulas follow by the same calculation
without constructing each full periodic value string separately.

More generally, Theorem~\ref{thm:main-s} applies to every even degree,
including degrees having arbitrarily many nonzero binary digits.  Thus the
binary decomposition
\[
 d=2^{\nu_2(d)}D
\]
separates the two roles cleanly: the power of two determines the Hamming
weight block length, while the odd part determines the positions of the
$1$-blocks.

\section{Conclusion}

We have completely determined the sensitivity, average sensitivity, and
block sensitivity of the elementary symmetric Boolean functions
$\sigma_{n,d}$ for every $1\le d\le n$.  The three calculations are governed
by the same binary structure.  Writing
\[
 d=2^{\nu_2(d)}D,\qquad D\ {\rm odd},
\]
Lucas' theorem shows that the Hamming-weight value sequence is constant on
blocks whose length is determined by $2^{\nu_2(d)}$, while the odd part $D$
determines the locations of the $1$-blocks through binary containment.

For sensitivity, this gives a uniform exact formula for every even degree and,
together with the known odd-degree case, a complete determination for
arbitrary degree.  The same transition structure yields an exact
average-sensitivity formula through sensitive-edge counting.  For block
sensitivity, the previously known run-based theory for symmetric Boolean
functions combines with the binary structure to reduce the global maximum to
at most four explicit candidates: the initial run, the final run, the first
complete $1$-run, and the last complete $1$-run.

A further structural consequence concerns the relation among sensitivity,
block sensitivity, and certificate complexity for symmetric Boolean functions
in general.  We prove that every nonconstant symmetric Boolean function $f$
satisfies
\[
 \bs(f)\le \max\{s(f),C(f)-1\}.
\]
Thus $\bs(f)=C(f)$ forces $s(f)=\bs(f)=C(f)$, and the equality pattern
$s(f)<\bs(f)=C(f)$ is impossible in the symmetric class.

Combining this general theorem with the arbitrary-degree
certificate-complexity formula in \cite{ZhangLi2026} yields a complete
classification for elementary symmetric Boolean functions.  Exactly three
patterns are realizable:
\[
 s(\sigma_{n,d})=\bs(\sigma_{n,d})=C(\sigma_{n,d}),
\]
\[
 s(\sigma_{n,d})=\bs(\sigma_{n,d})<C(\sigma_{n,d}),
\]
and
\[
 s(\sigma_{n,d})<\bs(\sigma_{n,d})<C(\sigma_{n,d}).
\]
Necessary and sufficient conditions are obtained for each pattern.  In
particular, for even $d=hD$ and $n=hq+u$, equality
$s=\bs=C$ holds exactly when $D\preceqtwo q$ or
$D\preceqtwo(q-1)$; otherwise $\bs<C$, and the explicit block-sensitivity
candidates decide whether $s=\bs$ or $s<\bs$.  The strict hierarchy is
realized infinitely often; for every $m\ge1$,
\[
 s(\sigma_{8m+4,6})=8m<8m+1=\bs(\sigma_{8m+4,6})
 <8m+3=C(\sigma_{8m+4,6}).
\]

Thus the binary containment structure supplied by Lucas' theorem provides a
single combinatorial framework for the full elementary-symmetric family.  It
replaces separate periodic-string and residue-class calculations for
individual degrees, yields exact formulas for the principal sensitivity
parameters, and determines all equality and strict-separation patterns among
$s$, $\bs$, and $C$.

\section*{Declaration of Generative AI and AI-assisted technologies in the manuscript preparation process}

During the preparation of this work, the authors used ChatGPT (OpenAI) to
assist with the organization and presentation of the manuscript and with the
examination of mathematical arguments.  The authors independently verified
all mathematical results and proofs, reviewed and edited the AI-assisted
content, and take full responsibility for the content of the article.

\end{document}